\renewcommand*\backref[1]{\ifx#1\relax \else \mbox{\textcolor{gray}{Page #1}} \fi}
\newcommand\blfootnote[1]{%
	\begingroup
	\renewcommand\thefootnote{}\footnote{#1}%
	\addtocounter{footnote}{-1}%
	\endgroup
}
\newtheorem{theorem}{Theorem}[section]
\newtheorem{lemma}[theorem]{Lemma}
\newtheorem{proposition}[theorem]{Proposition}
\newtheorem{corollary}[theorem]{Corollary}
\theoremstyle{remark}
\newtheorem{remark}[theorem]{Remark}
\numberwithin{equation}{section}
\newcommand{\N}{\mathbb{N}}
\newcommand{\R}{\mathbb{R}}
\newcommand{\Rn}{\R^n}
\newcommand{\sn}{\mathbb{S}^{n-1}}
\newcommand{\K}{\mathcal{K}}
\newcommand{\Kn}{\K^n}
\newcommand{\hm}{\mathcal H} 
\newcommand{\fconvs}{{\mathrm{Conv}_{\mathrm{sc}}(\R^n)}} 
\newcommand{\fconvf}{{\mathrm{Conv}(\R^n; \R)}} 
\newcommand{\oZ}{\operatorname{Z}}
\newcommand{\oz}{\operatorname{z}}
\newcommand{\om}{\operatorname{m}}
\newcommand{\ot}{\operatorname{t}}
\newcommand{\dom}{\operatorname{dom}}
\newcommand{\epi}{\operatorname{epi}}
\newcommand{\vol}{\operatorname{vol}}
\newcommand{\oZZ}[2]{{\operatorname{V}}_{#1,#2}} 
\newcommand{\MA}{\mathrm{MA}} 
\newcommand{\sq}{\mathbin{\vcenter{\hbox{\rule{.3ex}{.3ex}}}}} 
\newcommand{\SO}{\operatorname{SO}}
\newcommand{\SOn}{\SO(n)}
\newcommand{\On}{\operatorname{O}(n)}
\newcommand{\Ot}{\operatorname{O}(2)}
\renewcommand{\d}{\,\mathrm{d}} 
\newcommand{\Hess}{{\operatorname{D}}^2} 
\newcommand{\ind}{{\mathbf{I}}} 
\begin{document}
    \title{A Klain--Schneider Theorem for Vector-Valued Valuations on Convex Functions}
    \author{Mohamed A.\ Mouamine and Fabian Mussnig}
    \date{}
    \maketitle
    
    \begin{abstract}		
        A functional analog of the Klain--Schneider theorem for vector-valued valuations on convex functions is established, providing a classification of continuous, translation covariant, simple valuations. Under additional rotation equivariance assumptions, an analytic counterpart of the moment vector is characterized alongside a new epi-translation invariant valuation. The former arises as the top-degree operator in a family of functional intrinsic moments, which are linked to functional intrinsic volumes through translations. The latter represents the top-degree operator in a class of Minkowski vectors, which are introduced in this article and which lack classical counterparts on convex bodies, as they vanish due to the Minkowski relations. Additional classification results are obtained for homogeneous valuations of extremal degrees.
        
        \blfootnote{{\bf 2020 AMS subject classification:} 52B45 (26B12, 26B15, 26B25, 52A41)}
        \blfootnote{{\bf Keywords:} valuation, convex function, simple, moment vector, intrinsic moment, Minkowski vector}
    \end{abstract}
    
    {
        \hypersetup{linkcolor=black}
        \small
        \tableofcontents
    }
    
    \goodbreak
    
    \normalsize
        
    \section{Introduction}
    \paragraph{The Klain--Schneider Theorem}
    Let $\Kn$ denote the set of non-empty, compact, convex subsets of $\Rn$, whose elements we call \textit{convex bodies}. We say that a map $\oZ$ on $\Kn$ that takes values in an Abelian semigroup $\langle \mathbb{A},+\rangle$ is a \textit{valuation}, whenever
    \begin{equation*}
        \oZ(K \cup L) + \oZ(K\cap L)=\oZ(K)+\oZ(L)
    \end{equation*}
    for every $K,L\in\Kn$ such that also $K\cup L\in\Kn$. The probably most important example of a real-valued valuation is the \textit{$n$-dimensional volume} or \textit{Lebesgue measure}, denoted by $\vol_n$. For vector-valued valuations, taking values in $\Rn$, a similar role is played by the \textit{moment vector}
	\[
	m(K)=\int_K x\d x
	\]
	for $K\in\Kn$, where integration is understood with respect to the $n$-dimensional Lebesgue measure on $\Rn$.
    
    Starting with Dehn's solution of Hilbert's Third Problem, the systematic study of valuations on convex bodies has since become a cornerstone of convex geometry. In line with Felix Klein's Erlangen Program, particular attention has been given to classifying valuations that remain invariant under specific group actions (see, for example, \cite{alesker_annals_1999,alesker_faifman_2014,bernig_gafa_2009,hug_schneider_gafa_2014,ludwig_reitzner_2010,schuster_wannerer_jems_2018}). An early highlight of this line of research is Hadwiger's celebrated characterization of the \textit{intrinsic volumes} \cite{hadwiger}. About 30 years ago, Klain found a classification of even, continuous (w.r.t.\ the Hausdorff metric), translation invariant, simple valuations, which allowed for an elegant new proof of Hadwiger's characterization theorem \cite{klain_1995}. Here, a valuation $\oZ\colon\Kn\to\R$ is \textit{translation invariant} if $\oZ(K+x)=\oZ(K)$ for every $K\in\Kn$ and $x\in\Rn$, and it is \textit{simple} if it vanishes on convex bodies of dimension less than $n$. Shortly after Klain's work, Schneider could remove the assumption of evenness, leading to what is now known as the Klain--Schneider theorem \cite[Theorem 2]{schneider_simple}.
	
    \begin{theorem}[Klain--Schneider]
		\label{thm:klain_schneider}
		A map $\oZ\colon\Kn\to\R$ is a continuous, translation invariant, simple valuation, if and only if there exist a constant $c\in\R$ and an odd, continuous function $g\colon\sn\to\R$ such that
		\[
		\oZ(K)=c\, \vol_n(K)+\int_{\sn} g(z)\d S_{n-1}(K,z)
		\]
		for every $K\in\Kn$.
    \end{theorem}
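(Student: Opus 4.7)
The sufficiency direction is a routine verification. The volume $\vol_n$ is the prototypical continuous, translation invariant, simple valuation. The surface area measure $K \mapsto S_{n-1}(K,\cdot)$ is a weakly continuous, translation invariant, measure-valued valuation on $\Kn$, so integration against a continuous $g \colon \sn \to \R$ yields a continuous, translation invariant, real-valued valuation. Simplicity in this case uses that whenever $\dim K \le n-1$, the measure $S_{n-1}(K,\cdot)$ is invariant under $z \mapsto -z$, and an odd integrand against a symmetric measure integrates to zero.

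For the converse, I would split $\oZ = \oZ^+ + \oZ^-$ into its even and odd parts under $K \mapsto -K$, noting that each part inherits continuity, translation invariance, and simplicity. The even part $\oZ^+$ is handled by Klain's original theorem \cite{klain_1995}, which gives $\oZ^+(K) = c\, \vol_n(K)$ for some $c \in \R$. Klain's argument restricts $\oZ^+$ to $(n-1)$-dimensional affine subspaces and inducts on dimension: in the even setting, the resulting Klain function on the Grassmannian is forced to be constant.

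The odd part requires Schneider's strategy. First reduce to polytopes by density and continuity. On polytopes, the valuation property combined with simplicity yields dissection additivity: if a hyperplane cuts $P$ into $P_1 \cup P_2$, then $P_1 \cap P_2$ has dimension $<n$, so $\oZ^-(P) = \oZ^-(P_1) + \oZ^-(P_2)$. The key technical step is to leverage this to write $\oZ^-(P) = \sum_F \phi(F)$ as a sum indexed by facets of $P$, with each $\phi(F)$ depending only on $F$ modulo translation. Translation invariance and the additivity of $\phi$ under subdividing a facet then force $\phi(F) = g(u_F)\, \vol_{n-1}(F)$ for a function $g \colon \sn \to \R$, giving the desired surface-area representation on polytopes.

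To finish, the oddness of $\oZ^-$ combined with the identity $S_{n-1}(-K,\cdot) = S_{n-1}(K,-\cdot)$ forces $g$ to be odd, and testing $\oZ^-$ against small perturbations of suitable polytopes yields continuity of $g$ on $\sn$. The formula extends from polytopes to all of $\Kn$ by density and continuity, completing the proof. \textbf{The main obstacle} is the facet-by-facet decomposition step: extracting a structured sum over facets from mere dissection additivity is the technical heart of Schneider's argument, and it is precisely there that oddness is essential to rule out volume-like contributions that would otherwise arise in the induction.
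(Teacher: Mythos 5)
The paper does not prove this statement---it is the classical Klain--Schneider theorem, cited from \cite{klain_1995} and \cite{schneider_simple} as background, while the paper's own ingredient of this type is the functional version on $\fconvs$ (Theorem~\ref{thm:char_simple_real_val}, imported from \cite{colesanti_ludwig_mussnig_8}). There is therefore no in-paper proof to compare against, so I assess your sketch on its own. The sufficiency direction is correct and essentially complete; in particular, your verification of simplicity of the boundary term, using that $S_{n-1}(K,\cdot)$ is invariant under $z\mapsto -z$ whenever $\dim K\le n-1$, is exactly the right observation.

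For necessity, the even/odd split with Klain's theorem disposing of $\oZ^+$ is the standard and correct organization, but the treatment of $\oZ^-$ contains a genuine gap at the step where you assert that ``translation invariance and the additivity of $\phi$ under subdividing a facet force $\phi(F)=g(u_F)\vol_{n-1}(F)$''. Even granting the facet sum $\oZ^-(P)=\sum_F\phi(F)$, the list of properties you impose on $\phi(\cdot,u)$---continuity, translation invariance, additivity under subdivision within the hyperplane orthogonal to $u$---describes precisely a simple, continuous, translation invariant valuation on $(n-1)$-dimensional polytopes, and by the very theorem you are trying to prove, one dimension down, such a valuation need not be a multiple of $(n-1)$-volume: it can carry its own term $\int g_u\d S_{n-2}$. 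What actually has to be proved is that $\oZ^-$ has no homogeneous components of degrees $1,\ldots,n-2$; degree $0$ dies by simplicity, degree $n$ is a multiple of volume and hence absent from the odd part, and degree $n-1$ is $\int g\d S_{n-1}(K,\cdot)$ by McMullen's characterization, with $g$ then forced odd via $S_{n-1}(-K,\cdot)=S_{n-1}(K,-\cdot)$. That vanishing of the intermediate degrees is the real technical core of both Klain's and Schneider's papers (the former via constancy of the Klain function on the Grassmannian, the latter via a prism/cylinder induction that works without evenness); your sketch correctly names the obstacle, but the mechanism you propose---dissection additivity feeding a facet sum---does not by itself overcome it.
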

    \goodbreak
    \noindent
    Here, $S_{n-1}(K,\cdot)$ is the \textit{surface area measure} of the convex body $K\in\Kn$. If $K$ is full-dimensional, then $S_{n-1}(K,\omega)$ is the $(n-1)$-dimensional Hausdorff measure of the boundary points of $K$ that have an outer unit normal in the Borel set $\omega\subseteq\sn$.
    
    \medskip
    
    The Klain--Schneider theorem became an important asset in the development of modern valuation theory. See, for example, \cite{alesker_annals_1999,alesker_gafa_2001,alesker_faifman_2014,bernig_hug_2018,boeroeczky_domokos_solanes_jfa_2021,klain_2001,li_ma_jfa_2017,parapatits_wannerer,schuster_wannerer_ajm_2015,solanes_wannerer}. One of its numerous consequences is the fact, that for $n\geq 2$, multiples of $\vol_n$ are the only continuous, translation and rotation invariant, simple valuations, where $\oZ\colon\Kn\to\R$ is \textit{rotation invariant} if $\oZ(\vartheta K)=\oZ(K)$ for every $K\in\Kn$ and $\vartheta\in\SOn$. From this fact together with the relation
    \[
    m(K+x)=m(K)+\vol_n(K)x
    \]
    for $K\in\Kn$ and $x\in\Rn$, one can obtain the following characterization of the moment vector. We remark that this result independently follows from \cite[Hauptsatz]{hadwiger_schneider} and refer to \cite{haberl_jems,ludwig_tams_2005,zeng_ma_tams} for further characterizations of the moment vector. We will use a lowercase $\oz$ for vector-valued valuations throughout this article.
	
    \begin{corollary}
        \label{cor:moment_vector}
        For $n\geq 2$, a map $\oz\colon\Kn\to\Rn$ is a continuous, translation covariant, rotation equivariant, simple valuation, if and only if there exists a constant $c\in\R$ such that $\oz(K)=c\,m(K)$ for every $K\in\Kn$.
    \end{corollary}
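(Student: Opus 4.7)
The sufficiency follows directly from the properties of $m$, so assume $\oz:\Kn\to\Rn$ satisfies the listed hypotheses. The strategy is to first extract from translation covariance the correct scalar multiple of $m$ to subtract, then apply Theorem~\ref{thm:klain_schneider} componentwise to the translation-invariant remainder, and finally exploit rotation equivariance together with the Minkowski relation $\int_{\sn}z\,\d S_{n-1}(K,z)=0$ to conclude.

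Translation covariance of $\oz$ yields a scalar functional $\phi:\Kn\to\R$ with $\oz(K+x)=\oz(K)+\phi(K)x$ for all $K,x$. A direct computation shows that $\phi$ is a continuous, simple valuation: comparing $\oz((K+y)+x)$ in two ways gives $\phi(K+y)=\phi(K)$, while rotation equivariance of $\oz$ yields $\phi(\vartheta K)\vartheta x=\phi(K)\vartheta x$, so $\phi$ is $\SOn$-invariant. The consequence of Theorem~\ref{thm:klain_schneider} recalled just above the corollary then forces $\phi=c\,\vol_n$ for some $c\in\R$. Consequently $\tilde{\oz}:=\oz-c\,m$ is continuous, translation invariant, rotation equivariant, simple, and vector-valued, and the claim reduces to showing $\tilde{\oz}\equiv 0$.

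Applying Theorem~\ref{thm:klain_schneider} to each component of $\tilde{\oz}$ produces $d\in\Rn$ and an odd continuous $g:\sn\to\Rn$ with
\[
\tilde{\oz}(K)=d\,\vol_n(K)+\int_{\sn}g(z)\,\d S_{n-1}(K,z).
\]
By the Minkowski relation, $g$ may be replaced by $g-L|_{\sn}$ for any matrix $L\in\R^{n\times n}$ without altering $\tilde{\oz}$; use this freedom to normalize $g$ so that its spherical-harmonic expansion has no degree-one component. Testing $\tilde{\oz}(\vartheta K)=\vartheta\tilde{\oz}(K)$ on balls of varying radius pins down $d=0$ (the two summands scale as distinct powers of the radius), and for general $K$ yields
\[
\int_{\sn}\bigl[g(\vartheta z)-\vartheta g(z)\bigr]\,\d S_{n-1}(K,z)=0\qquad\text{for every }\vartheta\in\SOn.
\]
The scalar Minkowski-type uniqueness, applied to each component, forces $z\mapsto g(\vartheta z)-\vartheta g(z)$ to be linear in $z$. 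However, the $\SOn$-action preserves spherical-harmonic degree, so both $g\circ\vartheta$ and $\vartheta g$ remain free of degree-one harmonics, and so does their difference; a difference that is simultaneously linear in $z$ and free of the degree-one part must vanish. Hence $g$ is pointwise rotation equivariant, and a standard argument using the stabilizer of a pole (the vector $g(e_n)$ is fixed by $\SOnm$, hence proportional to $e_n$) gives $g(z)=\alpha z$ for some $\alpha\in\R$; the normalization kills $\alpha$, so $g\equiv 0$ and $\tilde{\oz}\equiv 0$.

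The most delicate point is the third paragraph: the Klain--Schneider representation is unique only up to adding a linear function to $g$, so a naive imposition of rotation equivariance produces a cocycle on $\SOn$ valued in matrix space rather than pointwise equivariance. The spherical-harmonic normalization removes precisely this ambiguity, converting the cocycle condition into honest pointwise equivariance, whose one-parameter family of solutions $g(z)=\alpha z$ is then ruled out by the same normalization.
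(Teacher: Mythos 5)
The paper gives no explicit proof of this corollary, only the hint preceding its statement: from the scalar Klain--Schneider consequence (multiples of $\vol_n$ are the only continuous, translation and rotation invariant, simple valuations) together with the relation $m(K+x)=m(K)+\vol_n(K)x$. Your argument is exactly the natural completion of this hint --- establish $\oz^0=c\,\vol_n$, subtract $c\,m$, then show the translation invariant remainder $\tilde\oz$ vanishes --- and you correctly identify and handle the genuine subtlety, namely that the Klain--Schneider representation determines $g$ only modulo restrictions of linear maps, so rotation equivariance gives a cocycle condition rather than pointwise equivariance. Your degree-one normalization is the right device to resolve this. The proof is correct in substance.

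Two small points to tighten. First, the parenthetical justification of $d=0$ ("the two summands scale as distinct powers of the radius") is a red herring: on balls $rB$ centered at the origin the second summand vanishes outright because $g$ is odd, so $\tilde\oz(rB)=d\,\vol_n(rB)$, and rotation equivariance forces $d=\vartheta d$ for all $\vartheta\in\SOn$, hence $d=0$. Second, in the last paragraph the deduction "$g(e_n)$ is fixed by $\SOnm$, hence proportional to $e_n$" requires $n\geq 3$; for $n=2$ the stabilizer $\SO(1)$ is trivial and $g(e_2)$ is unconstrained, so equivariance only yields $g(z)=Lz$ for some $L\in\R^{2\times 2}$ commuting with $\SO(2)$, a two-parameter family rather than $\alpha z$. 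This is still a restriction of a linear map, so the degree-one normalization kills it and the conclusion $g\equiv 0$ persists, but the argument as written does not literally cover $n=2$ (which the corollary includes). It is worth noting that this is precisely the dimension where the functional analog (Corollary~\ref{cor:char_m_alpha_t_alpha_simple}) requires $\On$ rather than $\SOn$; on convex bodies the extra $\SO(2)$-equivariant piece is annihilated by the Minkowski relation $\int_{\sn}z\,\d S_{n-1}(K,z)=o$, which is exactly what your normalization step exploits.
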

    \noindent
    Here, we say that $\oz\colon\Kn\to\Rn$ is \textit{translation covariant} if there exists an associated map $\oz^0\colon\Kn\to\R$ such that
    \[
    \oz(K+x)=\oz(K)+\oz^0(K)\,x
    \]
    for every $K\in\Kn$ and $x\in\Rn$. Furthermore, $\oz$ is \textit{rotation equivariant} if $\oz(\vartheta K)=\vartheta \oz(K)$ for every $K\in\Kn$ and $\vartheta\in\SOn$.
	
    \paragraph{Real-Valued Valuations on Convex Functions}
    Among the many extensions and generalizations of valuations are their extensions from convex bodies to various functions spaces \cite{alesker_adv_geom_2019,baryshnikov_ghrist_wright,colesanti_pagnini_tradacete_villanueva_jfa_2021,ludwig_adv_geom_2011,ludwig_ajm_2012,mussnig_lc,tsang_mink}. Of significant interest are convex functions which have been a particular focus of research during the last decade \cite{colesanti_ludwig_mussnig_1,colesanti_ludwig_mussnig_2,colesanti_ludwig_mussnig_4,colesanti_ludwig_mussnig_3,colesanti_ludwig_mussnig_8,colesanti_ludwig_mussnig_5,knoerr_support,knoerr_singular,knoerr_smooth,knoerr_ulivelli_math_ann_2024,knoerr_ulivelli_polynomial}. Let
    \[
    \fconvf=\{v\colon\Rn\to\R : v \text{ is convex}\}
    \]
    denote the space of (finite-valued) convex functions on $\Rn$. A map $\oZ\colon\fconvf\to\langle \mathbb{A},+\rangle$ is a \textit{valuation} if
    \[
    \oZ(v\vee w) + \oZ(v\wedge w) = \oZ(v)+\oZ(w)
    \]
    for every $v,w\in\fconvf$ such that also their pointwise maximum $v\vee w$ and pointwise minimum $v\wedge w$ are elements of $\fconvf$. In this setting, the second author, together with Colesanti and Ludwig, obtained a functional version of Hadwiger's theorem, which characterizes so-called \textit{functional intrinsic volumes} \cite{colesanti_ludwig_mussnig_5}. Shortly afterward, the same authors found a new, more efficient approach to this result, based on an extension of Klain's approach to Hadwiger's theorem \cite{colesanti_ludwig_mussnig_8}. A critical element developed for this new proof is a Klain--Schneider theorem for valuations on convex functions. In the following we say that a map $\oZ$ on $\fconvf$ is \textit{dually translation invariant} if
    \[
    \oZ(v+\ell)=\oZ(v)
    \]
    for every $v\in\fconvf$ and every linear functional $\ell\colon\Rn\to\R$. Furthermore, $\oZ$ is \textit{vertically translation invariant} if $\oZ(v+c)=\oZ(v)$ for every $v\in\fconvf$ and $c\in\R$. If $\oZ$ satisfies both of these properties, then it is called \textit{dually epi-translation invariant}, where this name comes from the fact that the composition of $\oZ$ with convex conjugation is invariant under the translation of epi-graphs (see Section~\ref{se:class_simple}). Lastly, $\oZ$ is \textit{dually simple} when $\oZ$ vanishes on functions that, up to the addition of linear functionals, depend on less than $n$ variables (in a suitable coordinate system). In the following result, which was obtained in \cite[Theorem 3.5]{colesanti_ludwig_mussnig_8}, continuity is understood with respect to \textit{epi-convergence} of functions, which on $\fconvf$ coincides with pointwise convergence.
	
    \begin{theorem}[Klain--Schneider theorem on $\fconvf$]
        \label{thm:klain_schneider_fconvf}
        A map $\oZ\colon\fconvf\to\R$ is a continuous, dually epi-translation invariant, dually simple valuation, if and only if there exists a function $\zeta\in C_c(\Rn)$ such that
        \[
        \oZ(v)=\int_{\Rn} \zeta(x) \d\MA(v;x)
        \]
        for every $v\in\fconvf$.
    \end{theorem}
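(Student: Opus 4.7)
The plan is to split the argument into the routine sufficiency direction and the substantive necessity direction. For necessity, I propose to identify a candidate density $\zeta$ from the action of $\oZ$ on translates of a fixed support function, and then propagate the representation via the valuation and continuity hypotheses.

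\textbf{Sufficiency.} I would verify the four required properties for $v\mapsto\int_{\Rn}\zeta\d\MA(v;\cdot)$. The Aleksandrov Monge--Ampère measure $\MA(v;\cdot)$ depends only on $v$ modulo the addition of affine functions, since $\Hess(v+\ell) = \Hess v$; this yields both dual translation and vertical translation invariance. Dual simplicity follows because the Hessian degenerates whenever $v$ factors through a proper subspace. The valuation property of $\MA$ reduces to the corresponding inclusion--exclusion for subdifferentials applied to $v\vee w$ and $v\wedge w$. Continuity against a fixed $\zeta\in C_c(\Rn)$ is then provided by the weak continuity of $\MA$ under epi-convergence on $\fconvf$.

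\textbf{Necessity: candidate density.} Fix $K\in\Kn$ with $\vol_n(K) > 0$. Since $\partial h_K(0) = K$ while $\partial h_K(x) \subset \partial K$ has Lebesgue measure zero for $x \ne 0$, one obtains $\MA(h_K(\cdot - x_0);\cdot) = \vol_n(K)\,\delta_{x_0}$ for every $x_0\in\Rn$. Any representation of the desired form would therefore force
\[
    \zeta(x_0) \;=\; \frac{1}{\vol_n(K)}\,\oZ\bigl(h_K(\cdot - x_0)\bigr),
\]
and I take this as the definition of $\zeta$. Continuity of $\zeta$ is inherited from continuity of $\oZ$ combined with the pointwise (hence epi-) convergence $h_K(\cdot - x_0)\to h_K(\cdot - y_0)$ as $x_0\to y_0$. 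Independence of $\zeta$ from the particular choice of $K$ is a compatibility statement, which I would establish by applying the valuation identity to pairs $K_1\subseteq K_2$ and using dual translation invariance to reduce the comparison to the known scaling of $\MA(h_K;\cdot)$ in $\vol_n(K)$.

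\textbf{Necessity: the representation.} Both $v\mapsto\oZ(v)$ and $v\mapsto\int \zeta\d\MA(v;\cdot)$ are continuous, dually epi-translation invariant, dually simple valuations on $\fconvf$ that coincide on $\{h_K(\cdot - x_0):x_0\in\Rn\}$. To propagate this agreement, my plan is to observe that finite pointwise maxima of translated support functions are epi-dense in $\fconvf$, and to use iterated inclusion--exclusion from the valuation property to reduce the desired identity on such maxima to the single-translate case; continuity then extends the identity to all of $\fconvf$. Compact support of $\zeta$ would be extracted from the finiteness of $\oZ(v)$ on every $v\in\fconvf$ -- for instance $v(x) = \tfrac12|x|^2$ has $\MA(v;\cdot)$ equal to Lebesgue measure, forcing $\zeta \in L^1(\Rn)$ -- combined with continuity tested against sequences whose Monge--Ampère mass escapes to infinity.

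\textbf{Main obstacle.} The principal difficulty is the propagation step. Neither the independence of $\zeta$ from the auxiliary body $K$ nor the extension from translated support functions to all of $\fconvf$ by iterated inclusion--exclusion is routine, because the dense class is not closed under $\vee$/$\wedge$ in a controlled way. Establishing compact support of $\zeta$ rigorously, as opposed to merely its decay at infinity, is the other nontrivial point and will likely demand a careful quantitative argument calibrated to the interplay between epi-convergence and the local structure of the Monge--Ampère measure.
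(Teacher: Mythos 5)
The paper does not prove this theorem; it is quoted from \cite[Theorem 3.5]{colesanti_ludwig_mussnig_8}, so there is no internal proof to compare against. Evaluated on its own merits, your proposal has a reasonable sufficiency direction but the necessity direction contains gaps that are not merely technical.

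The \emph{well-definedness of $\zeta$} is a genuine problem, and your proposed fix does not touch it. For each fixed $x_0$, the map $K\mapsto\oZ\bigl(h_K(\cdot-x_0)\bigr)$ is a continuous, translation invariant, simple valuation on $\Kn$ (using that $h_K\vee h_L=h_{K\cup L}$ and $h_K\wedge h_L=h_{K\cap L}$ whenever $K\cup L$ is convex, that $h_{K+y}=h_K+\langle y,\cdot\rangle$, and that $h_K$ is dually simple when $\dim K<n$). By the classical Klain--Schneider theorem (Theorem~\ref{thm:klain_schneider}) this map equals $c(x_0)\vol_n(K)+\int_{\sn}g_{x_0}\,\d S_{n-1}(K,\cdot)$ for some odd $g_{x_0}\in C(\sn)$, and your definition $\zeta(x_0)=\oZ\bigl(h_K(\cdot-x_0)\bigr)/\vol_n(K)$ is independent of $K$ precisely when $g_{x_0}\equiv 0$. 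Nothing in your argument forces this: testing the valuation identity on $K_1\subseteq K_2$, as you suggest, produces only the trivial identity $\oZ(h_{K_2})+\oZ(h_{K_1})=\oZ(h_{K_2})+\oZ(h_{K_1})$ because $h_{K_1}\vee h_{K_2}=h_{K_2}$ and $h_{K_1}\wedge h_{K_2}=h_{K_1}$ whenever $K_1\subseteq K_2$. Killing the odd surface-area part requires input from the behaviour of $\oZ$ away from translated support functions, i.e.\ exactly the part of the problem you have not yet solved.

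The \emph{propagation step} also fails as described. The valuation identity only applies to pairs $(v,w)$ with $v\wedge w$ convex, and the family $\{h_K(\cdot-x_0)\}$ (or finite maxima thereof) is not closed under $\wedge$: for two generic translates, $h_K(\cdot-x_1)\wedge h_K(\cdot-x_2)$ is not convex, so iterated inclusion--exclusion cannot be run on this class, and even epi-density of finite maxima of translates of a fixed $h_K$ in $\fconvf$ is doubtful (in $\R$, maxima of translates of $|x|$ have slopes $\pm1$ only and cannot approximate $x^2$). You flag this and the compact support of $\zeta$ as open; they are not side issues but the heart of the theorem, and the $L^1$-bound from $v=|x|^2/2$ is far from compact support. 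The established proof in \cite{colesanti_ludwig_mussnig_8} proceeds on the dual space $\fconvs$ via a structurally different argument (closer to Schneider's original induction on dimension together with a compact-support argument for the density), and you should either adapt that route or find a genuinely new mechanism for the two steps above rather than inclusion--exclusion on translated support functions.
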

    \noindent
    Here, $C_c(\Rn)$ denotes the set of continuous functions with compact support on $\Rn$, and $\MA(v;\cdot)$ is the \textit{Monge--Amp\`ere measure} associated with $v\in\fconvf$.	If in addition $v\in C^2(\Rn)$, then
    \begin{equation}
        \label{eq:ma_det}
        \d \MA(v;\cdot)=\det(\Hess v(x)) \d x,
    \end{equation}
    where $\det(\Hess v(x))$ denotes the determinant of the Hessian matrix of $v$ at $x\in\Rn$. As a function of $v$, equation \eqref{eq:ma_det} continuously extends to $\fconvf$ with respect to the topology induced by weak convergence of measures (see, for example, \cite[Proposition 2.6]{figalli}).  At this point, let us mention that recently Theorem~\ref{thm:klain_schneider_fconvf} inspired an analog of the Klain--Schneider theorem for zonal valuations on convex bodies \cite{brauner_hofstaetter_ortega-moreno_zonal}. See also \cite{knoerr_zonal}.
    
    \paragraph{Vector-Valued Valuations on Convex Functions}
    The focus of this paper lies on vector-valued valuations on $\fconvf$ that commute with rotations. We say that map $\oz\colon\fconvf\to\Rn$ is \textit{dually translation covariant} if there exists an associated map $\oz^0\colon\fconvf\to\R$ such that
    \[
    \oz(v+\langle x, \cdot \rangle) = \oz(v)+\oz^0(v)\, x
    \]
    for every $v\in\fconvf$ and $x\in\Rn$, where $\langle\cdot,\cdot\rangle$ is the usual scalar product on $\Rn$. In the following we associate with each $v\in\fconvf$ the \textit{Hessian measure} $\Theta_0(v;\cdot)$, which is the unique Borel measure on $\Rn\times\Rn$ such that $v\mapsto \Theta_0(v;\cdot)$ is weakly continuous and such that
    \[
    \int_{\Rn\times \Rn} \eta(x,y) \d\Theta_0(v;(x,y))=\int_{\Rn} \eta(x,\nabla v(x)) \det(\Hess v(x))\d x
    \]
    for $v\in \fconvf\cap C^2(\Rn)$ and continuous $\eta\colon \Rn\times\Rn\to\R$ with compact support with respect to the first variable (see \cite[Theorem 7.3]{colesanti_ludwig_mussnig_3} and Theorem~\ref{thm:theta} below). Let us point out that the Monge--Amp\`ere measure associated with $v$ can be retrieved as a marginal from $\Theta_0(v;\cdot)$, that is, $\MA(v;B)=\Theta_0(v;B \times\Rn)$ for every Borel set $B\subset \Rn$.
    
    \medskip
    
    We establish the following vector-valued analog of the Klain--Schneider theorem for convex functions, where $C_c(\Rn;\Rn)$ denotes the set of continuous function $\psi\colon\Rn\to\Rn$ with compact support. For the importance of the assumption of vertical translation invariance, we refer to \cite{colesanti_ludwig_mussnig_4} and we remark that this property will allow us to connect the appearing operators with functional intrinsic volumes (see Section~\ref{se:intrinsic_moments_minkowski_vectors}).
	 	
    \begin{theorem}
        \label{thm:class_cov_simple}
        A map $\oz\colon \fconvf\to\Rn$ is a continuous, dually translation covariant, vertically translation invariant, dually simple valuation, if and only if there exist functions $\psi\in C_c(\Rn;\Rn)$ and $\zeta\in C_c(\Rn)$ such that
        \begin{equation}
            \label{eq:class_cov_simple}
            \oz(v)=\int_{\Rn}\psi(x) \d\MA(v;x) +  \int_{\Rn\times \Rn} \zeta(x) y \d \Theta_0(v;(x,y))
        \end{equation}
        for every $v\in\fconvf$. 
    \end{theorem}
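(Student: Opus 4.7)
My strategy is to reduce~\eqref{eq:class_cov_simple} to the scalar Klain--Schneider theorem on $\fconvf$ (Theorem~\ref{thm:klain_schneider_fconvf}) applied componentwise, after first peeling off a canonical covariant summand. Sufficiency is a verification: since $\MA(v;\cdot)$ is invariant under the addition of linear functionals, the first summand on the right-hand side of~\eqref{eq:class_cov_simple} is dually epi-translation invariant. For the second summand, a direct computation on $\fconvf\cap C^2(\Rn)$ shows that $v\mapsto v+\langle x_0,\cdot\rangle$ fixes $\Hess v$ and shifts $\nabla v$ by $x_0$, so that $\Theta_0(v+\langle x_0,\cdot\rangle;\cdot)$ is the push-forward of $\Theta_0(v;\cdot)$ under $(x,y)\mapsto (x,y+x_0)$. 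Weak continuity of $v\mapsto\Theta_0(v;\cdot)$ extends this identity to all of $\fconvf$, and testing against $\zeta(x)\,y$ yields
\[
\int_{\Rn\times\Rn}\zeta(x)\,y\,\d\Theta_0(v+\langle x_0,\cdot\rangle;(x,y)) = \int_{\Rn\times\Rn}\zeta(x)\,y\,\d\Theta_0(v;(x,y)) + x_0\int_{\Rn}\zeta(x)\,\d\MA(v;x).
\]
Hence the second summand is dually translation covariant with associated scalar map $v\mapsto \int\zeta\,\d\MA(v;\cdot)$, and the remaining properties are inherited from the corresponding ones for $\MA$ and $\Theta_0$.

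For necessity, let $\oz^0\colon\fconvf\to\R$ be the associated scalar map determined by $\oz^0(v)\,x=\oz(v+\langle x,\cdot\rangle)-\oz(v)$. Using $(v\vee w)+\langle x,\cdot\rangle=(v+\langle x,\cdot\rangle)\vee(w+\langle x,\cdot\rangle)$ and the analogous identity for $\wedge$, subtracting the valuation identity for $\oz$ at $(v,w)$ from that at $(v+\langle x,\cdot\rangle, w+\langle x,\cdot\rangle)$ shows that $\oz^0$ is itself a valuation. Its continuity and dual simplicity follow immediately from the defining relation, and applying that relation twice gives the dual translation invariance $\oz^0(v+\langle y,\cdot\rangle)=\oz^0(v)$, while the vertical invariance of $\oz$ passes to $\oz^0$ in the same way. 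Theorem~\ref{thm:klain_schneider_fconvf} therefore produces $\zeta\in C_c(\Rn)$ with $\oz^0(v)=\int_{\Rn}\zeta(x)\,\d\MA(v;x)$.

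Now set $\oz_1(v):=\int_{\Rn\times\Rn}\zeta(x)\,y\,\d\Theta_0(v;(x,y))$. By the sufficiency computation, $\oz_1$ is dually translation covariant with associated scalar equal to $\oz^0$, so $\oz-\oz_1$ is dually translation invariant; combined with vertical invariance (via $\Theta_0(v+c;\cdot)=\Theta_0(v;\cdot)$), continuity, dual simplicity, and the valuation property, $\oz-\oz_1$ is a continuous, dually epi-translation invariant, dually simple, $\Rn$-valued valuation. Applying Theorem~\ref{thm:klain_schneider_fconvf} in each coordinate yields $\psi\in C_c(\Rn;\Rn)$ with $(\oz-\oz_1)(v)=\int_{\Rn}\psi(x)\,\d\MA(v;x)$, which is~\eqref{eq:class_cov_simple}. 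The main technical point is the transformation rule for $\Theta_0$ under the addition of a linear functional, together with the well-posedness of $\oz_1$; the latter is fine because $\zeta$ has compact support $K$ and, for any $v\in\fconvf$, the subdifferential of $v$ is bounded on $K$, so that $\Theta_0(v;\cdot)$ is compactly supported in $y$ over $K\times\Rn$ and $|y|$ is integrable against $\zeta(x)\,\d\Theta_0(v;(x,y))$.
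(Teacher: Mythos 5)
Your argument is correct and follows essentially the same reduction as the paper: extract the scalar associated map $\oz^0$, identify it via the Klain--Schneider theorem on $\fconvf$, subtract a canonical covariant summand built from that $\zeta$, and apply the scalar theorem coordinate-wise to the remaining dually epi-translation invariant valuation. The difference is one of presentation rather than substance. The paper first transfers the whole problem to $\fconvs$ via Legendre--Fenchel conjugation (Theorem~\ref{thm:class_cov_simple_fconvs}), where the covariant summand takes the concrete form $u\mapsto\int_{\dom(u)}\zeta(\nabla u(x))\,x\,\d x$, and its translation covariance and simplicity are established by an elementary change of variables (Proposition~\ref{prop:top_degree_val_fconvs}). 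You instead stay on $\fconvf$ and derive the push-forward identity for the Hessian measure $\Theta_0$ under $v\mapsto v+\langle x_0,\cdot\rangle$; via relation~\eqref{eq:theta_conj} this is exactly the dual statement, so the two routes buy the same thing, with the $\fconvs$ picture being computationally lighter. Two small points you should make explicit: the weak-continuity extension of the push-forward identity from $C^2$ functions to all of $\fconvf$ and the well-posedness of $\oz_1$ both rest on Theorem~\ref{thm:theta}, which permits test functions that are compactly supported only in the first variable (your informal subdifferential-boundedness remark is the reason that theorem holds, but it is cleaner to invoke it directly); and the dual simplicity of $\oz-\oz_1$ requires observing that $\MA(v;\cdot)$ and $\Theta_0(v;\cdot)$ vanish whenever $v$ depends, up to an affine term, on fewer than $n$ variables, which on the $\fconvs$ side the paper gets immediately from the use of $n$-dimensional Lebesgue measure.
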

    \noindent
    Note that the moment vector on convex bodies can be retrieved from the last term \eqref{eq:class_cov_simple}. Indeed, it is easy to show that
    \begin{equation}
        \label{eq:retrieve_moment_vector}
        \int_{\Rn\times\Rn} \zeta(x)y \d \Theta_0(h_K;(x,y))=\zeta(o)\, m(K)
    \end{equation}
    for every convex body $K\in\Kn$, where $o$ denotes the origin in $\Rn$ (see Lemma~\ref{le:retrieve_moment_vector} below).
    
    \medskip
	
    Mimicking the behavior of $K\mapsto m(K)$ on convex bodies, we will consider vector-valued valuations $\oz$ on $\fconvf$ that are \textit{rotation equivariant}. That is, $\oz(v\circ \vartheta^{-1})=\vartheta \oz(v)$ for every $v\in\fconvf$ and $\vartheta\in\SO(n)$. Similarly, we define \textit{$\On$ equivariance} which will be necessary for lower-dimensional cases and which we also call \textit{reflection equivariance} when $n=1$. Assuming such an additional property in Theorem~\ref{thm:class_cov_simple} leads us to the following two operators: for a function $\alpha\in C_c({[0,\infty)})$ let $\om_\alpha^*\colon \fconvf\to\Rn$ be defined as the unique continuous operator such that
    \begin{equation*}
        \om_\alpha^*(v)=\int_{\Rn} \alpha(|x|)\nabla v(x)\det(\Hess v(x))\d x
    \end{equation*}
    for $v\in\fconvf\cap C^2(\Rn)$, where $|x|=\sqrt{\langle x,x\rangle}$ denotes the Euclidean norm of $x\in\Rn$. The above definition continuously extends to $\fconvf$, without additional $C^2$ assumptions, using the Hessian measure $\Theta_0(v;\cdot)$. In addition, let
    \[
    \ot_{n,\xi}^*(v)=\int_{\Rn} \xi(|x|)x \d\MA(v;x)
    \]
    for $v\in\fconvf$, where $\xi$ is a continuous function with bounded support on $(0,\infty)$, the set of which we denote by $C_b((0,\infty))$, with the additional property that $\lim_{t\to 0^+}\xi(t)t=0$. Observe that this condition on $\xi$ is equivalent to the fact that $x\mapsto \xi(|x|)x$ continuously extends from $\Rn\setminus\{o\}$ to $\Rn$ (see Lemma~\ref{le:cond_xi}).
	
    Our following main result, which is a functional analog of Corollary~\ref{cor:moment_vector}, is obtained as a consequence of Theorem~\ref{thm:class_cov_simple}.
	
    \begin{corollary}
        \label{cor:char_m_alpha_t_alpha_simple}
        For $n\geq 3$, a map $\oz\colon\fconvf\to\Rn$ is a continuous, dually translation covariant, vertically translation invariant, rotation equivariant, dually simple valuation, if and only if there exist functions $\alpha\in C_c({[0,\infty)})$ and $\xi\in C_b((0,\infty))$ with $\lim_{t\to 0^+} \xi(t)t=0$ such that
        \[
        \oz(v)=\ot_{n,\xi}^*(v)+\om_\alpha^*(v)
        \]
        for every $v\in\fconvf$. For $n\leq 2$, the same representation holds if we replace rotation equivariance with $\On$ equivariance.
    \end{corollary}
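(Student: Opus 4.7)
The plan is to derive the result from Theorem~\ref{thm:class_cov_simple} by using the extra rotation symmetry to force a radial shape on the densities $\psi$ and $\zeta$. For \textbf{sufficiency}, with $\psi(x) = \xi(|x|)x$ — continuously extended to the origin via Lemma~\ref{le:cond_xi} — and $\zeta(x) = \alpha(|x|)$, the sum $\ot_{n,\xi}^* + \om_\alpha^*$ fits the template \eqref{eq:class_cov_simple}, so that continuity, dual translation covariance, vertical translation invariance, and dual simplicity are inherited from Theorem~\ref{thm:class_cov_simple}. Rotation (respectively $\On$) equivariance follows by a change of variables: for $\vartheta\in\SOn$, both $\MA(v\circ\vartheta^{-1};\cdot)$ and $\Theta_0(v\circ\vartheta^{-1};\cdot)$ are pushforwards of $\MA(v;\cdot)$ and $\Theta_0(v;\cdot)$ under the (diagonal) action of $\vartheta$, and the radial form of $\alpha$ together with $\psi(\vartheta x) = \vartheta\psi(x)$ yields $\oz(v\circ\vartheta^{-1}) = \vartheta\oz(v)$.

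\textbf{Necessity.} Conversely, given $\oz$ as in the statement, Theorem~\ref{thm:class_cov_simple} provides $\psi\in C_c(\Rn;\Rn)$ and $\zeta\in C_c(\Rn)$ with $\oz$ of the form \eqref{eq:class_cov_simple}. Inserting $v + \langle x_0, \cdot\rangle$ into \eqref{eq:class_cov_simple}, and using that $\MA$ is invariant under addition of linear functionals while $\Theta_0$ shifts its second marginal by $x_0$, one identifies the associated map as $\oz^0(v) = \int_{\Rn}\zeta(x)\d\MA(v;x)$. A standard argument shows that $\oz^0$ inherits rotation invariance from $\oz$, so $\int_{\Rn}\zeta(\vartheta x)\d\MA(v;x) = \int_{\Rn}\zeta(x)\d\MA(v;x)$ for every $v\in\fconvf$ and every relevant $\vartheta$. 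Since Monge--Amp\`ere measures of suitably chosen convex functions (e.g., translates of cones) concentrate at arbitrary points of $\Rn$, this forces $\zeta(\vartheta x) = \zeta(x)$, whence $\zeta(x) = \alpha(|x|)$ for some $\alpha\in C_c([0,\infty))$. Substituting this radial $\zeta$ back into the full equivariance identity $\oz(v\circ\vartheta^{-1}) = \vartheta\oz(v)$ cancels the $\Theta_0$ terms and yields $\int_{\Rn}\bigl(\psi(\vartheta x) - \vartheta\psi(x)\bigr)\d\MA(v;x) = 0$ for all $v\in\fconvf$, which gives $\psi(\vartheta x) = \vartheta\psi(x)$ pointwise by the same concentration argument.

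\textbf{Radial reduction of $\psi$.} For $x\neq o$ and $\vartheta$ in the stabilizer of $x$, the constraint reads $\vartheta\psi(x) = \psi(x)$. For $n\geq 3$ the stabilizer in $\SOn$ is $\SOnm$ acting on $x^\perp$, whose fixed vectors are exactly the multiples of $x$; for $n\leq 2$ the added reflection available in $\On$ enforces the same conclusion. Setting $\xi(t) = t^{-1}\langle\psi(te),e\rangle$ for any unit vector $e$ produces a function $\xi\in C_b((0,\infty))$ with $\psi(x) = \xi(|x|)x$ for $x\neq o$. Continuity of $\psi$ at the origin together with $\psi(o) = o$ — forced by $\vartheta\psi(o) = \psi(o)$ for every $\vartheta$ — gives $\lim_{t\to 0^+}\xi(t)t = 0$. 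Substituting $\psi(x) = \xi(|x|)x$ and $\zeta(x) = \alpha(|x|)$ back into \eqref{eq:class_cov_simple} identifies the first integral with $\ot_{n,\xi}^*(v)$ and the second with $\om_\alpha^*(v)$.

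\textbf{Main obstacle.} The crux is the clean decoupling of $\psi$ and $\zeta$ from the single equivariance equation — achieved here by first isolating $\zeta$ through translation covariance — combined with enough richness in the family $\{\MA(v;\cdot) : v\in\fconvf\}$ to convert the resulting integral identities into pointwise constraints on the densities. The low-dimensional cases $n = 1, 2$ are where the upgrade from $\SOn$ to $\On$ equivariance is genuinely necessary: the $\SO(2)$-stabilizer of a nonzero vector is trivial, leaving a tangential component of $\psi(x)$ free to rotate as $|x|$ varies, and for $n=1$ there is no nontrivial rotation at all, so without reflections the radial structure cannot be extracted.
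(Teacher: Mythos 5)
Your proof is correct, and it takes a genuinely different route to decouple the two densities. The paper works on the conjugate side $\fconvs$ and plugs the test functions $u=\ind_K+\langle y,\cdot\rangle$ into the equivariance identity, producing in one stroke the relation $o=\vol_n(K)\bigl(\psi(\vartheta y)-\vartheta\psi(y)\bigr)+\bigl(\zeta(\vartheta y)-\zeta(y)\bigr)\vartheta\,m(K)$; it then separates the constraints on $\psi$ and $\zeta$ by first taking $K$ with $\vol_n(K)>0$ and $m(K)=o$, and afterward taking $K$ with $m(K)\neq o$. You instead decouple structurally: you identify $\oz^0(v)=\int_{\Rn}\zeta\,\d\MA(v;\cdot)$ as the associated valuation, observe that $\oz^0$ inherits $\SOn$-invariance directly from the rotation equivariance of $\oz$, and conclude that $\zeta$ is radial before touching $\psi$; only then do you return to the equivariance identity for $\oz$, where the $\Theta_0$ terms cancel and the remaining constraint involves $\psi$ alone. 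Both strategies finish with the same $\SOnm$-stabilizer (respectively $\On$) argument, which the paper packages as Lemma~\ref{le:g_commutes}, and your use of cone translates $v(x)=|x-x_0|$, whose Monge--Amp\`ere measure is a point mass at $x_0$, is a valid device for converting the integral identities into pointwise ones. Your structural decoupling via $\oz^0$ is an elegant alternative; the paper's choice of test functions is marginally more self-contained, as it needs no separate observation about the rotation invariance of the associated map.
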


    Let us stress that the new valuation $\ot_{n,\xi}^*$ is dually epi-translation invariant and does not have a (non-trivial) counterpart in the classical theory on convex bodies. We will discuss this in more detail in Section~\ref{se:intrinsic_moments_minkowski_vectors}, where we will introduce \textit{functional intrinsic moments} and \textit{Minkowski vectors}. Before that, however, it should be mentioned that if we replace translation covariance with translation invariance in Corollary~\ref{cor:char_m_alpha_t_alpha_simple}, then we immediately obtain the following characterization of $\ot_{n,\xi}^*$.
	
    \begin{corollary}
        \label{cor:t_alpha_simple}
        For $n\geq 3$, a map $\oz\colon\fconvf\to\Rn$ is a dually simple, continuous, dually epi-translation invariant, and rotation equivariant valuation, if and only if there exists a function $\xi\in C_b((0,\infty))$ with $\lim_{t\to 0^+}\xi(t)t=0$ such that
        \[
        \oz(v)=\ot_{n,\xi}^*(v)
        \]
        for every $v\in\fconvf$. For $n\leq 2$, the same representation holds if we replace rotation equivariance with $\On$ equivariance.
    \end{corollary}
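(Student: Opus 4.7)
The plan is to derive this corollary as a direct consequence of Corollary~\ref{cor:char_m_alpha_t_alpha_simple}, since dual epi-translation invariance is strictly stronger than the combination of dual translation covariance and vertical translation invariance used there. Sufficiency of the representation is verified by unpacking the definitions: the identities $\MA(v+\ell;\cdot)=\MA(v;\cdot)$ for every linear functional $\ell$ and $\MA(v+c;\cdot)=\MA(v;\cdot)$ for every constant $c$ (both coming from $\MA$ depending only on $\Hess v$) yield dual epi-translation invariance of $\ot_{n,\xi}^*$; rotation equivariance follows from the radial weight $x\mapsto \xi(|x|)x$ together with the covariance of $\MA$ under rotations; continuity and the valuation property are inherited from $v\mapsto\MA(v;\cdot)$; and simplicity holds because the Monge--Amp\`ere measure vanishes whenever $v$ depends effectively on fewer than $n$ variables.

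For the converse, any $\oz$ as in the statement is in particular dually translation invariant (hence dually translation covariant with associated real-valued operator $\oz^0\equiv 0$) and vertically translation invariant. Therefore Corollary~\ref{cor:char_m_alpha_t_alpha_simple} applies (using $\On$ equivariance in place of rotation equivariance when $n\leq 2$), producing
\[
\oz(v)=\ot_{n,\xi}^*(v)+\om_\alpha^*(v)
\]
for some $\xi\in C_b((0,\infty))$ with $\lim_{t\to 0^+}\xi(t)t=0$ and some $\alpha\in C_c({[0,\infty)})$. Since $\ot_{n,\xi}^*$ is itself dually translation invariant, so too must be $\om_\alpha^*=\oz-\ot_{n,\xi}^*$.

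To exploit this, I would compute the behavior of $\om_\alpha^*$ under dual translation. For $v\in\fconvf\cap C^2(\Rn)$ and $w(x)=v(x)+\langle z,x\rangle$, the defining relation for $\Theta_0$ gives $\Theta_0(w;A\times B)=\Theta_0(v;A\times (B-z))$, so
\[
\om_\alpha^*(w)=\om_\alpha^*(v)+z\int_{\Rn}\alpha(|x|)\d\MA(v;x),
\]
and by continuity this persists on all of $\fconvf$. Dual translation invariance for every $z\in\Rn$ therefore forces $\int_{\Rn}\alpha(|x|)\d\MA(v;x)=0$ for every $v\in\fconvf$. Testing this identity with the family $v_{x_0}(x)=|x-x_0|$, whose Monge--Amp\`ere measure equals $\omega_n\delta_{x_0}$ (with $\omega_n$ the volume of the Euclidean unit ball), yields $\alpha(|x_0|)=0$ for every $x_0\in\Rn$, so $\alpha\equiv 0$ and $\oz=\ot_{n,\xi}^*$.

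The main obstacle, and indeed the only nontrivial point, is evaluating $\MA(|\cdot-x_0|;\cdot)$ on the non-smooth test family; this is handled by the standard subgradient computation $\partial|\cdot|(0)=B^n$, or alternatively by the smooth approximation $v_{x_0,\varepsilon}(x)=\sqrt{\varepsilon^2+|x-x_0|^2}$ combined with the weak continuity of $v\mapsto\Theta_0(v;\cdot)$. With this in hand, the proof closes essentially for free from Corollary~\ref{cor:char_m_alpha_t_alpha_simple}.
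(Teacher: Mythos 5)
Your proof is correct and follows essentially the same route the paper intends: Corollary~\ref{cor:t_alpha_simple} is obtained from Corollary~\ref{cor:char_m_alpha_t_alpha_simple} by observing that dual epi-translation invariance forces the $\om_\alpha^*$ summand to vanish. The paper leaves this last step implicit (``we immediately obtain''), whereas you make it explicit by computing the translation behavior of $\om_\alpha^*$ and testing with $v_{x_0}(x)=|x-x_0|$, whose Monge--Amp\`ere measure is $\omega_n\delta_{x_0}$ — a clean way to see that $\alpha\equiv 0$.
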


    \paragraph{Plan of This Article}
    We collect basic results on convex functions, Hessian measures, and valuations in Section~\ref{se:preliminaries}. In Section~\ref{se:class_simple} we prove Theorem~\ref{thm:class_cov_simple} and Corollary~\ref{cor:char_m_alpha_t_alpha_simple}. There, we will work in the dual setting of super-coercive, convex functions. After this, we will consider a Steiner-type formula and introduce functional intrinsic moments and Minkowski vectors in Section~\ref{se:intrinsic_moments_minkowski_vectors}. Finally, we present classification results under homogeneity assumptions in Section~\ref{se:class_hom}.
	
    \section{Preliminaries}
    \label{se:preliminaries}
    Let us collect some basic results on convex functions, where we refer to \cite{rockafellar_wets,schneider_cb} as standard references. For a function $w\colon\Rn\to [-\infty,\infty]$, the \textit{convex conjugate} or \textit{Legendre--Fenchel transform} $w^*$ is defined as
    \[
    w^*(y)=\sup\nolimits_{x\in\Rn} \big(\langle x,y \rangle - w(x)\big)
    \]
    for $y\in\Rn$. The image of $\fconvf$ under convex conjugation is
    \[
    \fconvs = \left\{u\colon \Rn \to (-\infty,\infty] : u \text{ is l.s.c., convex, } u\not\equiv \infty\text{, } \lim\nolimits_{|x|\to\infty} \tfrac{u(x)}{|x|}=\infty\right\},
    \]
    the space of proper, lower semicontinuous, super-coercive, convex functions. It is straightforward to check that $u\in\fconvs$, if and only if $u^*\in\fconvf$. Conversely, $v\in\fconvf$, if and only if $v^*\in\fconvs$. Furthermore, convex conjugation is a continuous involution between $\fconvf$ and $\fconvs$, where we equip spaces of convex functions with the topology associated with epi-convergence (see, for example, \cite[Proposition 7.2]{rockafellar_wets} for a description). On $\fconvf$, this is equivalent to pointwise convergence of functions, and on $\fconvs$, this is essentially equivalent to the Hausdorff convergence of level sets \cite[Lemma 5]{colesanti_ludwig_mussnig_1}.
	
    Let us also mention that the composition of convex conjugation with building the Monge--Amp\`ere measure of a function takes the easy form
    \[
    \int_{\Rn} \zeta(x)\d\MA(u^*;x)=\int_{\dom(u)} \zeta(\nabla u(x)) \d x
    \]
    for $u\in\fconvs$ and $\zeta\in C_c(\Rn)$. Here, $\dom(u)=\{x\in\Rn : u(x) <\infty\}$ is the \textit{domain} of $u$ and, by convexity, $u$ is differentiable almost everywhere on the interior of this set. The above is a special case of the more general relation
    \begin{equation}
    \label{eq:theta_conj}
    \int_{\Rn} \zeta(x) y \d\Theta_0(u^*;(x,y))= \int_{\dom(u)} \zeta(\nabla u(x))x \d x
    \end{equation}
    for $u\in\fconvs$ and $\zeta\in C_c(\Rn)$. See, for example, \cite[Section 10.4]{colesanti_ludwig_mussnig_3}.
    
    \medskip
	
    We will use the following result, which is due to \cite[Theorem 17]{colesanti_ludwig_mussnig_4}.
    \begin{theorem}
        \label{thm:theta}
        If $\eta\in C(\Rn\times\Rn)$ has compact support with respect to the first variable, then
        \[
        \oZ(v)=\int_{\Rn\times\Rn} \eta(x,y)\d\Theta_0(v;(x,y))
        \]
        is well-defined for every $v\in\fconvf$ and defines a continuous valuation on $\fconvf$. Moreover,
        \[
        \oZ(v)=\int_{\Rn} \eta(x,\nabla v(x)) \det(\Hess v(x))\d x
        \]
        for every $v\in\fconvf\cap C^2(\Rn)$.
    \end{theorem}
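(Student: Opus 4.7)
The final $C^2$ formula is not a separate claim but precisely the defining property of $\Theta_0(v;\cdot)$ stated just before the theorem, so no additional argument is needed there. What requires proof is that, for arbitrary $v\in\fconvf$, the integral is finite and the resulting functional $\oZ$ is a continuous valuation. I would proceed in three steps: well-definedness, valuation property, and continuity.

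\textbf{Well-definedness.} Choose $R>0$ with $\eta(x,y)=0$ for $|x|>R$. Since the first marginal of $\Theta_0(v;\cdot)$ is the Monge--Amp\`ere measure $\MA(v;\cdot)$, we have $\Theta_0(v;B_R\times\Rn)=\MA(v;B_R)<\infty$. In addition, $\Theta_0(v;\cdot)$ is concentrated on the graph of the subdifferential $\{(x,y):y\in\partial v(x)\}$: for $v\in\fconvf\cap C^2(\Rn)$ this follows directly from the defining formula, and the extension to all of $\fconvf$ is obtained by approximating $v$ by smooth convex functions, using the weak continuity of $\Theta_0$, and exploiting that the closure of the graph of the subdifferential is stable under epi-convergence. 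Because $\dom v=\Rn$, the image $\partial v(\bar B_R)$ is compact, say contained in $\bar B_M$, and hence
\[
|\oZ(v)|\le\bigl(\sup\nolimits_{(x,y)\in\bar B_R\times\bar B_M}|\eta(x,y)|\bigr)\cdot\MA(v;B_R)<\infty.
\]

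\textbf{Valuation property.} The cleanest route is to invoke the measure-valued valuation property of the Hessian measure established in \cite{colesanti_ludwig_mussnig_3}, namely
\[
\Theta_0(v\vee w;\cdot)+\Theta_0(v\wedge w;\cdot)=\Theta_0(v;\cdot)+\Theta_0(w;\cdot)
\]
whenever $v\vee w,v\wedge w\in\fconvf$. Integrating both sides against $\eta$ (which is legitimate by the first step applied to each of $v,w,v\vee w,v\wedge w$) yields the valuation identity for $\oZ$.

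\textbf{Continuity.} Suppose $v_k\to v$ in $\fconvf$ and let $R$ be as above. The idea is to truncate in $y$: fix a continuous cutoff $\chi_M\colon\Rn\to[0,1]$ equal to $1$ on $B_M$ and vanishing outside $B_{2M}$. Since $(x,y)\mapsto\eta(x,y)\chi_M(y)$ is a bounded continuous function with compact support in $\Rn\times\Rn$, the weak continuity of $\Theta_0$ gives
\[
\int\eta\,\chi_M\d\Theta_0(v_k;\cdot)\longrightarrow\int\eta\,\chi_M\d\Theta_0(v;\cdot).
\]
It remains to control the tail $\int\eta(1-\chi_M)\d\Theta_0(v_k;\cdot)$ uniformly in $k$. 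Using the concentration on the subdifferential graph noted above, together with the standard convex-analytic fact that along an epi-convergent sequence with limit having full effective domain the subgradients are uniformly bounded on compact sets, one obtains $M_0$ with $\bigcup_k\partial v_k(\bar B_R)\cup\partial v(\bar B_R)\subset\bar B_{M_0}$. For $M\ge M_0$ the tail integrals vanish identically, so combining the two pieces gives $\oZ(v_k)\to\oZ(v)$.

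The main obstacle is the tail control in the continuity step: one needs both the concentration of $\Theta_0(v;\cdot)$ on the subdifferential graph and the uniform boundedness of subgradients along epi-convergent sequences. Both ingredients rest on the hypothesis $\dom v=\Rn$ built into $\fconvf$, and are what make the bound $M_0$ a function of $R$ and the sequence rather than a free parameter.
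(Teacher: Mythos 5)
The paper does not actually prove this theorem: it is quoted with a pointer to \cite[Theorem 17]{colesanti_ludwig_mussnig_4}, so there is no internal argument against which to compare your proof. Judged on its own merits, your plan is sound and isolates exactly the right ingredients --- the marginal identity $\MA(v;B)=\Theta_0(v;B\times\Rn)$, the concentration of $\Theta_0(v;\cdot)$ on the subdifferential graph, local uniform boundedness of subgradients along an epi-convergent (equivalently, locally uniformly convergent) sequence in $\fconvf$, and the measure-valued valuation identity for the Hessian measure. You are also right that the displayed $C^2$ formula is precisely the defining property of $\Theta_0$ and requires no separate argument.

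The one step to tighten is your justification of the concentration claim for arbitrary $v\in\fconvf$. The phrase "the closure of the graph of the subdifferential is stable under epi-convergence" is doing a lot of work: weak convergence of measures does not by itself transport supports to the limit, so this is not a freestanding lemma you can invoke. A clean way to close the gap is dual in spirit: fix $\phi\in C_c(B_R\times\Rn)$ vanishing on a neighborhood of $\{(x,y):|x|\le R,\ y\in\partial v(x)\}$; pick smooth convex $v_k\to v$ locally uniformly; graphical convergence of subgradients on compacta then gives $\phi(x,\nabla v_k(x))=0$ for all $|x|\le R$ once $k$ is large, so $\int\phi\d\Theta_0(v_k;\cdot)=0$ eventually, and weak continuity of $\Theta_0$ forces $\int\phi\d\Theta_0(v;\cdot)=0$. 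With this lemma in hand, your compactness bound for well-definedness and your tail estimate in the continuity step both go through as written, and the valuation step (integrating the measure-level identity $\Theta_0(v\vee w;\cdot)+\Theta_0(v\wedge w;\cdot)=\Theta_0(v;\cdot)+\Theta_0(w;\cdot)$ against $\eta$) is unobjectionable once well-definedness holds for all four functions.
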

	
    Convex bodies are naturally embedded into $\fconvf$ by associating with each convex body $K\in\Kn$ its \textit{support function}
    \[
    h_K(x)=\max\nolimits_{y\in K} \langle x,y\rangle
    \]
    for $x\in\Rn$. Taking the convex conjugate leads to
    \begin{equation}
    \label{eq:ind_K}
    h_K^*(x)=\ind_K(x)=\begin{cases}
        0\quad &\text{if } x\in K,\\
        \infty\quad &\text{if } x\not\in K,
    \end{cases}
    \end{equation}
    the \textit{(convex) indicator function} of $K$, which is an element of $\fconvs$. We close this section with the following easy result.
	
    \begin{lemma}
        \label{le:retrieve_moment_vector}
        Let $\zeta\in C_c(\Rn)$. For every $K\in\Kn$ we have
        \[
        \int_{\Rn\times\Rn} \zeta(x)y \d \Theta_0(h_K;(x,y))=\zeta(o) m(K).
        \]
        In particular,
        \[
        \om_\alpha^*(h_K)=\alpha(0) m(K)
        \]
        for every $\alpha\in C_c({[0,\infty)})$ and $K\in\Kn$.
    \end{lemma}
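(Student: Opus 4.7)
The plan is to exploit the conjugation relation $(\ind_K)^* = h_K$ from \eqref{eq:ind_K} and apply the identity \eqref{eq:theta_conj}, which converts an integral against $\Theta_0(h_K;\cdot)$ into an ordinary Lebesgue integral over $\dom(\ind_K) = K$.

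First, I would record that $\ind_K \in \fconvs$ with $\dom(\ind_K) = K$. On the interior of $K$, the function $\ind_K$ is identically zero, hence differentiable with $\nabla \ind_K \equiv 0$. Since the boundary $\partial K$ has Lebesgue measure zero, this yields $\nabla \ind_K(x) = o$ for almost every $x \in K$. Applying \eqref{eq:theta_conj} with $u = \ind_K$ (so that $u^* = h_K$) then gives
\[
\int_{\Rn \times \Rn} \zeta(x)\, y \d\Theta_0(h_K;(x,y)) = \int_{K} \zeta(\nabla \ind_K(x))\, x \d x = \zeta(o) \int_K x \d x = \zeta(o)\, m(K),
\]
which establishes the first identity.

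For the statement on $\om_\alpha^*$, I would first verify that the continuous extension of $\om_\alpha^*$ from $\fconvf \cap C^2(\Rn)$ to all of $\fconvf$ admits the Hessian-measure representation
\[
\om_\alpha^*(v) = \int_{\Rn \times \Rn} \alpha(|x|)\, y \d\Theta_0(v;(x,y)).
\]
This follows from Theorem~\ref{thm:theta} applied componentwise to $\eta_i(x,y) = \alpha(|x|)\, y_i$, which is continuous with compact support in the first variable because $\alpha \in C_c([0,\infty))$. Specializing the first part of the lemma to $\zeta(x) := \alpha(|x|) \in C_c(\Rn)$ then immediately yields $\om_\alpha^*(h_K) = \alpha(0)\, m(K)$.

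There is essentially no obstacle here; the lemma is a direct consequence of \eqref{eq:theta_conj} together with the triviality of $\nabla \ind_K$ on the interior of $K$. The only minor point is that \eqref{eq:theta_conj} is stated for scalar integrands, so one must read it componentwise to accommodate the $\Rn$-valued integrand $\zeta(x)\, y$, but this is purely formal.
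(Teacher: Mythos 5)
Your proof is correct and follows essentially the same route as the paper's: apply \eqref{eq:theta_conj} with $u = \ind_K$ (so $u^* = h_K$), use $\nabla \ind_K \equiv o$ on $K$ up to a null set, and then specialize $\zeta(x) = \alpha(|x|)$ for the second part. The extra details you supply (the measure-zero boundary, the componentwise reading of \eqref{eq:theta_conj}, and the Hessian-measure representation of $\om_\alpha^*$) are all implicit in the paper's one-line computation.
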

    \begin{proof}
        For $K\in\Kn$ it follows from Theorem~\ref{thm:theta}, \eqref{eq:theta_conj}, and \eqref{eq:ind_K} that
        \[
        \int_{\Rn\times\Rn} \zeta(x)y \d \Theta_0(h_K;(x,y)) = \int_{\dom(\ind_K)} \zeta(\nabla \ind_K(x)) x \d x = \zeta(o) \int_K x \d x = \zeta(o)\, m(K).
        \]
        The second part of the statement immediately follows by choosing $\zeta(x)=\alpha(|x|)$.
    \end{proof}

    \section{Classification of Simple Valuations}
    \label{se:class_simple}
    The purpose of this section is to prove Theorem~\ref{thm:class_cov_simple} and Corollary~\ref{cor:char_m_alpha_t_alpha_simple}. We will establish the equivalent dual version of these results on $\fconvs$, Theorem~\ref{thm:class_cov_simple_fconvs}. Let us first explain this equivalence: a map $\oZ\colon\fconvs\to\langle A,+\rangle$ is a valuation, if and only if $v\mapsto \oZ^*(v)=\oZ(v^*)$ is a valuation on $\fconvf$. The operator $\oZ$ is called \textit{translation invariant}, if and only if $\oZ^*$ is dually translation invariant. By the properties of the Legendre--Fenchel transform, this is equivalent to
    \[
    \oZ(u\circ \tau^{-1})=\oZ(u)
    \]
    for every $u\in\fconvs$ and translation $\tau$ on $\Rn$. If $\oZ$ is in addition vertically translation invariant, then we say that $\oZ$ is \textit{epi-translation invariant}. This means that $u\mapsto \oZ(u)$ is invariant under translations of the \textit{epi-graph} of $u$,
    \[
    \epi(u)=\{(x,t)\in \Rn\times\R :  u(x)\leq t\},
    \]
    in $\R^{n+1}$. Moreover, we say that $\oZ$ is \textit{simple} if it vanishes on functions $u\in\fconvs$ whose domain is of dimension less than $n$.
	
    A vector-valued valuation $\oz$ on $\fconvs$ is \textit{translation covariant} if there exists an associated map $\oz^0\colon\fconvs\to\R$ such that
    \[
    \oz(u\circ \tau_x^{-1})=\oz(u)+\oz^0(u)x
    \]
    for every $u\in\fconvs$ and $x\in\Rn$, where $\tau_x$ denotes the translation $y\mapsto y+x$ on $\Rn$. Equivalently, $\oz^*$ is dually translation covariant. Lastly, the definitions of rotation equivariance, $\On$ equivariance, and reflection invariance are the same as for a valuation on $\fconvf$. 
	
    \medskip
	
    We start by establishing the properties of the translation covariant valuations, that we will characterize in the course of this section.
	
    \begin{proposition}
        \label{prop:top_degree_val_fconvs}
        If $\zeta\in C_c(\Rn)$, then
        \begin{equation}
            \label{eq:top_degree_val_fconvs}
            u\mapsto \int_{\dom(u)} \zeta(\nabla u(x))x \d x
        \end{equation}
        is a continuous, translation covariant, vertically translation invariant, and simple valuation on\linebreak$\fconvs$.
    \end{proposition}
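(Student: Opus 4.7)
The plan is to establish each of the listed properties by reducing to the conjugate picture on $\fconvf$ via \eqref{eq:theta_conj}, where Theorem~\ref{thm:theta} already provides continuity and the valuation identity, and then to handle vertical translation invariance, translation covariance, and simplicity by elementary manipulations on $\fconvs$.

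First I would note that, by applying \eqref{eq:theta_conj} coordinate-wise, for each $i\in\{1,\dots,n\}$ the $i$th component of \eqref{eq:top_degree_val_fconvs} equals
\[
\int_{\Rn\times\Rn} \zeta(x)\,y_i\d\Theta_0(u^*;(x,y)).
\]
Since $\zeta\in C_c(\Rn)$, the function $\eta_i(x,y)=\zeta(x)y_i$ lies in $C(\Rn\times\Rn)$ and has compact support in the first variable, so Theorem~\ref{thm:theta} shows that $v\mapsto\int \eta_i\d\Theta_0(v;\cdot)$ is a continuous valuation on $\fconvf$. Because convex conjugation is a continuous involution between $\fconvf$ and $\fconvs$ (which swaps $\vee$ and $\wedge$ on conjugate pairs and therefore preserves the valuation identity), each component of \eqref{eq:top_degree_val_fconvs} automatically inherits continuity and the valuation property on $\fconvs$.

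Next I would verify the remaining three properties directly from the representation \eqref{eq:top_degree_val_fconvs}. Simplicity is immediate: if $\dim\dom(u)<n$, then $\dom(u)$ has $n$-dimensional Lebesgue measure zero. Vertical translation invariance follows from $\dom(u+c)=\dom(u)$ and $\nabla(u+c)=\nabla u$ almost everywhere on the interior of $\dom(u)$. For translation covariance, given $x_0\in\Rn$ one has $\dom(u\circ\tau_{x_0}^{-1})=\dom(u)+x_0$ and $\nabla(u\circ\tau_{x_0}^{-1})(y)=\nabla u(y-x_0)$ where defined, so substituting $x=y-x_0$ gives
\[
\int_{\dom(u)+x_0} \zeta(\nabla u(y-x_0))\,y\d y = \int_{\dom(u)} \zeta(\nabla u(x))\,x\d x + x_0\int_{\dom(u)} \zeta(\nabla u(x))\d x,
\]
so the scalar companion $\oz^0(u)=\int_{\dom(u)}\zeta(\nabla u(x))\d x$ is well-defined and finite, agreeing with $\int_{\Rn}\zeta\d\MA(u^*;\cdot)$ by the scalar form of \eqref{eq:theta_conj}.

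The main potential obstacle I would anticipate is a bookkeeping one: the change-of-variables step implicitly relies on $u$ being differentiable almost everywhere on the interior of $\dom(u)$ and on the boundary of $\dom(u)$ being Lebesgue negligible, which is standard for proper convex functions but ought to be stated for clarity. This is precisely why routing continuity and the valuation identity through Theorem~\ref{thm:theta} is convenient: those two properties are inherited on all of $\fconvs$ directly from the $C^2$ identity \eqref{eq:theta_conj}, with no separate approximation argument required.
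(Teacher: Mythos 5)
Your proof is correct and follows essentially the same route as the paper: both invoke \eqref{eq:theta_conj} together with Theorem~\ref{thm:theta} coordinate-wise to get continuity and the valuation identity, and both verify translation covariance, vertical translation invariance, and simplicity by the same direct substitution and Lebesgue-measure argument on $\fconvs$. The only cosmetic difference is that the paper handles the horizontal and vertical translation in one combined computation $\oz(u\circ\tau_y^{-1}+c)$, whereas you separate them.
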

    \begin{proof}
        Let $\zeta\in C_c(\Rn)$ be given. We use \eqref{eq:theta_conj} together with Theorem~\ref{thm:theta} (applied to each coordinate of the vector-valued integral above) to see that \eqref{eq:top_degree_val_fconvs} defines a continuous vector-valued valuation $\oz$ on $\fconvs$. For $y\in\Rn$ and $c\in\R$ we have
        \begin{align*}
            \oz(u\circ \tau_{y}^{-1} + c)&= \int_{\dom(u\circ \tau_{y}^{-1}+c)} \zeta(\nabla (u\circ \tau_{y}^{-1}+c) (x)) x \d x\\
            &=\int_{\dom(u\circ \tau_{y}^{-1})} \zeta(\nabla u(x-y)) x \d x\\
            &=\int_{\dom(u)} \zeta(\nabla u(x))(x+y) \d x\\
            &=\oz(u)+y \int_{\dom(u)} \zeta(\nabla u(x))\d x
        \end{align*}
        for every $u\in\fconvs$, which shows that $\oz$ is translation covariant and vertically translation invariant. Lastly, if $u$ has lower-dimensional domain, then the use of the $n$-dimensional Lebesgue measure in \eqref{eq:top_degree_val_fconvs} shows that $\oz(u)=o$ or, equivalently, $\oz$ is simple.
    \end{proof}
	
    Next, we consider the map associated with a translation covariant valuation. The following result is a functional analog of a well-known fact for translation covariant valuations on convex bodies. See, for example, \cite[Theorem 10.5]{mcmullen_schneider}.
    \begin{lemma}
        \label{le:properties_oz_0}
        If $\oz\colon\fconvs\to\Rn$ is a continuous, translation covariant, vertically translation invariant valuation, then the associated map $\oz^0\colon\fconvs\to\R$ is a continuous, epi-translation invariant valuation. In addition, if $\oz$ is simple, then so is $\oz^0$.
    \end{lemma}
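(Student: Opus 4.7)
The plan is to manipulate the defining identity
\[
\oz(u\circ\tau_x^{-1}) = \oz(u) + \oz^0(u)\,x, \qquad u\in\fconvs,\ x\in\Rn,
\]
formally; no deep input beyond the stated structural properties of $\oz$ is needed. First, I would fix an arbitrary $x_0\in\Rn\setminus\{o\}$ and recover $\oz^0(u)$ uniquely from the scalar relation $\oz^0(u)\,x_0 = \oz(u\circ\tau_{x_0}^{-1}) - \oz(u)$ (e.g.\ by taking the inner product with $x_0/|x_0|^2$). Since $u\mapsto u\circ\tau_{x_0}^{-1}$ is continuous on $\fconvs$ with respect to epi-convergence---epi-graphs are merely translated by $(x_0,0)$---continuity of $\oz$ transfers directly to continuity of $\oz^0$.

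For the valuation property I would take $u_1, u_2\in\fconvs$ for which the valuation identity for $\oz$ applies, then apply it to the translates $u_i\circ\tau_x^{-1}$; the only thing to verify is that $\tau_x^{-1}$ is an automorphism of $\fconvs$ preserving the lattice structure that defines the valuation relation (which is immediate, since composing with a translation commutes with pointwise maxima/minima and preserves $\fconvs$). Subtracting the untranslated valuation identity and applying translation covariance to each of the four terms yields
\[
\big[\oz^0(u_1\vee u_2)+\oz^0(u_1\wedge u_2)\big]\,x = \big[\oz^0(u_1)+\oz^0(u_2)\big]\,x
\]
for every $x\in\Rn$, and choosing $x\neq o$ gives the valuation property of $\oz^0$.

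Epi-translation invariance splits into two pieces. For translation invariance I would expand $\oz(u\circ\tau_{x+y}^{-1})$ in two ways: directly via translation covariance applied at $x+y$, and via the identification $u\circ\tau_{x+y}^{-1}=(u\circ\tau_x^{-1})\circ\tau_y^{-1}$ followed by two successive uses of translation covariance. Equating and cancelling the common summand $\oz(u)+\oz^0(u)\,x$ leaves $\oz^0(u)\,y = \oz^0(u\circ\tau_x^{-1})\,y$ for every $y$, hence $\oz^0(u\circ\tau_x^{-1})=\oz^0(u)$. For vertical translation invariance I would use the identity $(u+c)\circ\tau_x^{-1}=(u\circ\tau_x^{-1})+c$, expand $\oz((u+c)\circ\tau_x^{-1})$ in two ways, once by applying translation covariance first and once by applying vertical translation invariance of $\oz$ first, and cancel a common factor of nonzero $x$ to obtain $\oz^0(u+c)=\oz^0(u)$.

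Simplicity is then an immediate corollary: if $u\in\fconvs$ has lower-dimensional domain, then so does $u\circ\tau_x^{-1}$, because translation preserves the affine dimension of $\dom(u)$. Thus $\oz(u)=\oz(u\circ\tau_x^{-1})=o$ by simplicity of $\oz$, and the covariance identity collapses to $\oz^0(u)\,x = o$ for every $x\in\Rn$, forcing $\oz^0(u)=0$. I do not anticipate a serious obstacle; the entire argument is bookkeeping once one notes that the single device of extracting the scalar $\oz^0(u)$ from the vector-valued identity at one fixed nonzero direction propagates continuity and all the structural properties from $\oz$ to $\oz^0$.
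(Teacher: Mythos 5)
Your proposal is correct and follows essentially the same route as the paper: you derive each property of $\oz^0$ by translating, applying the defining covariance identity together with the corresponding property of $\oz$, and cancelling the vector factor. The only cosmetic difference is that you split epi-translation invariance into its horizontal and vertical parts, whereas the paper verifies both in a single computation with $(u+c)\circ\tau_x^{-1}\circ\tau_y^{-1}$; the substance is identical.
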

    \begin{proof}
        Let a continuous, translation covariant, vertically translation invariant valuation $\oz$ on $\fconvs$ be given that takes values in $\Rn$. We first show that the associated map $\oz^0$ is a valuation. For this let $u,w\in\fconvs$ be such that $u\vee w, u\wedge w\in\fconvs$. Observe that
        \[
        (u\circ \tau_y^{-1})\vee (w\circ \tau_y^{-1}) = (u\vee w)\circ \tau_y^{-1}\quad \text{and} \quad (u\circ \tau_y^{-1})\wedge (w\circ \tau_y^{-1}) = (u\wedge w)\circ \tau_y^{-1} 
        \]
        for $y\in\Rn$. Thus, since $\oz$ is a translation covariant valuation,
        \begin{align*}
            \oz(u) + \oz(w) + (\oz^0(u)+\oz^0(w)) y&= \oz(u\circ \tau_y^{-1})+ \oz(w\circ \tau_y^{-1})\\
            &=\oz((u\circ \tau_y^{-1}) \vee (w \circ \tau_y^{-1})) + \oz((u\circ \tau_y^{-1}) \wedge (w \circ \tau_y^{-1}))\\
            &= \oz(u \vee w) + \oz(u\wedge w) + (\oz^0(u \vee w) + \oz^0(u\wedge w))y\\
            &= \oz(u) +\oz(w) + (\oz^0(u\vee w) + \oz^0 (u\wedge w))y
        \end{align*}
        for $y\in\Rn$, which shows the desired property
        \[
        \oz^0(u)+\oz^0(w) = \oz^0(u\vee w) + \oz^0 (u\wedge w).
        \]
        Next, for $u\in\fconvs$, $x,y\in\Rn$, and $c\in\R$, it follows from the translation covariance and vertical translation invariance of $\oz$ that
        \begin{align*}
            \oz(u)+\oz^0(u)(x+y) &= \oz(u\circ \tau_{x+y}^{-1})\\
            &= \oz((u+c)\circ \tau_x^{-1}\circ \tau_y^{-1})\\
            &= \oz((u+c)\circ \tau_x^{-1}) + \oz^0((u+c)\circ\tau_x^{-1})y\\
            &=\oz(u)+\oz^0(u)x + \oz^0((u+c)\circ\tau_x^{-1})y.
        \end{align*}
        Hence,
        \[
        \oz^0(u\circ \tau_x^{-1}+c) = \oz^0(u),
        \]
        which means that $\oz^0$ is epi-translation invariant. Similarly, if $u_k$, $k\in\N$, is an epi-convergent sequence in $\fconvs$ with limit $u\in\fconvs$, then for every $y\in\Rn$, $u_k\circ \tau_y^{-1}$ epi-converges to $u\circ \tau_y^{-1}$ as $k\to\infty$. Thus, by the continuity of $\oz$,
        \begin{align*}
            \oz(u)+\lim\nolimits_{k\to\infty} \oz^0(u_k)y &=\lim\nolimits_{k\to\infty}\big(\oz(u_k)+\oz^0(u_k)y\big)\\
            &=\lim\nolimits_{k\to\infty} \oz(u_k\circ \tau_y^{-1})\\
            &=\oz(u\circ \tau_y^{-1})\\
            &=\oz(u)+\oz^0(u)y,
        \end{align*}
        which shows that $\oz^0$ is continuous. Lastly, if $u\in\fconvs$ is such that the dimension of $\dom(u)$ is smaller than $n$, then so is the dimension of $\dom(u\circ \tau_y^{-1})$ for every $y\in\Rn$. Thus, if $\oz$ is simple, we have
		\[
		o=\oz(u\circ \tau_y^{-1})=\oz(u)+\oz^0(u)y = o + \oz^0(u)y
		\]
		for every $u\in\fconvs$ and $y\in\Rn$, and therefore also $\oz^0$ has to be simple.
    \end{proof}
	
    Lemma~\ref{le:properties_oz_0} indicates that studying continuous, translation covariant, vertically translation invariant, simple valuations goes hand in hand with investigating continuous, epi-translation invariant, simple, real-valued valuations on $\fconvs$. The latter is the content of the Klain--Schneider theorem on $\fconvs$, which was proven in \cite[Theorem 1.2]{colesanti_ludwig_mussnig_8} and which is dual to Theorem~\ref{thm:klain_schneider_fconvf}.
    \begin{theorem}
        \label{thm:char_simple_real_val}
        A map $\oZ\colon \fconvs\to\R$ is a continuous, epi-translation invariant, simple valuation, if and only if there exists a function $\zeta\in C_c(\Rn)$ such that
        \[
        \oZ(u)=\int_{\dom(u)} \zeta(\nabla u(x))\d x
        \]
        for every $u\in\fconvs$.
    \end{theorem}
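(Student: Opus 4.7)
The plan is to reduce Theorem~\ref{thm:char_simple_real_val} to its primal counterpart Theorem~\ref{thm:klain_schneider_fconvf} via the Legendre--Fenchel transform, using that conjugation is a continuous involution between $\fconvs$ and $\fconvf$ (recalled in Section~\ref{se:preliminaries}) and that, as explained at the beginning of Section~\ref{se:class_simple}, the valuation structures on the two spaces are intertwined via $\oZ^*(v) = \oZ(v^*)$.

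For the sufficiency direction, I would verify directly that $u \mapsto \int_{\dom(u)} \zeta(\nabla u(x)) \d x$ has the stated properties. By the identity $\int_{\dom(u)} \zeta(\nabla u(x)) \d x = \int_{\Rn} \zeta(x) \d\MA(u^*; x)$ recorded in Section~\ref{se:preliminaries}, the functional is the pullback under conjugation of a special case of the valuation from Theorem~\ref{thm:theta} (with $\eta(x,y) = \zeta(y)$), so continuity and the valuation property follow from the corresponding statements for that valuation together with the continuity of conjugation. Translation invariance is a change of variables using $\nabla(u \circ \tau_x^{-1})(y) = \nabla u(y - x)$; vertical translation invariance is immediate since gradients are unchanged by adding constants; and simplicity follows because $\dom(u)$ has vanishing $n$-dimensional Lebesgue measure whenever its dimension is less than $n$.

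For the necessity direction, given such a valuation $\oZ$ on $\fconvs$, I would form $\oZ^*\colon \fconvf \to \R$ with $\oZ^*(v) = \oZ(v^*)$. Using the identity $(u \wedge w)^* = u^* \vee w^*$ and, when $u \wedge w \in \fconvs$, its dual $(u \vee w)^* = u^* \wedge w^*$, the valuation property transfers from $\oZ$ to $\oZ^*$. Continuity passes through the continuity of conjugation, while the identities $(u \circ \tau_x^{-1})^* = u^* + \langle x, \cdot \rangle$ and $(u + c)^* = u^* - c$ convert epi-translation invariance of $\oZ$ into dual epi-translation invariance of $\oZ^*$. The standard fact that $\dom(u)$ has dimension less than $n$ if and only if $u^*$, modulo a linear functional, is constant along some non-trivial direction then shows that $\oZ^*$ is dually simple. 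Theorem~\ref{thm:klain_schneider_fconvf} produces a $\zeta \in C_c(\Rn)$ with $\oZ^*(v) = \int_{\Rn} \zeta(x) \d\MA(v;x)$, and substituting $v = u^*$ together with the identity from Section~\ref{se:preliminaries} closes the argument.

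The main obstacle is the careful verification of this dictionary between primal and dual notions, in particular the matching of simplicity across conjugation and the handling of the pointwise min/max identities on the restricted class of pairs where the valuation property applies. Once the dictionary is in place, the reduction to Theorem~\ref{thm:klain_schneider_fconvf} is routine.
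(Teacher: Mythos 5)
The paper does not prove this theorem: it cites it directly as \cite[Theorem 1.2]{colesanti_ludwig_mussnig_8} and merely remarks that it is dual to Theorem~\ref{thm:klain_schneider_fconvf}. Your proposal supplies exactly the duality argument that this remark points to, and it is correct. The one step you rightly flag as the main obstacle --- the lattice-theoretic part of the dictionary, namely that for $v_1,v_2\in\fconvf$ the function $v_1\wedge v_2$ is convex if and only if $v_1^*\wedge v_2^*$ is convex, so that on admissible pairs conjugation interchanges pointwise max and min and $\oZ^*(v)=\oZ(v^*)$ is a valuation on $\fconvf$ whenever $\oZ$ is one on $\fconvs$ --- is a known fact from the Colesanti--Ludwig--Mussnig series and should be cited rather than taken for granted. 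With that in place, your transfer of continuity, epi-translation invariance and simplicity across conjugation is accurate, the reduction to Theorem~\ref{thm:klain_schneider_fconvf} closes, and the sufficiency direction via Theorem~\ref{thm:theta} together with the relation $\int_{\dom(u)}\zeta(\nabla u(x))\d x=\int_{\Rn}\zeta(x)\d\MA(u^*;x)$ is likewise correct.
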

	
    We furthermore need the following two results to deal with rotation equivariance.
	
    \begin{lemma}
        \label{le:cond_xi}
        Let $\xi\colon (0,\infty)\to\R$ and $\psi\colon\Rn\to\Rn$ be such that
        \[
        \psi(x)=\xi(|x|)x
        \]
        for $x\in\Rn\setminus\{o\}$. The map $\psi$ can be chosen from $C_c(\Rn;\Rn)$, if and only if $\xi$ is continuous with bounded support on $(0,\infty)$ such that
        \begin{equation}
            \label{eq:cond_xi}
            \lim\nolimits_{t\to 0^+} \xi(t)t =0.
        \end{equation}
    \end{lemma}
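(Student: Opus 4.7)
The statement is an essentially elementary continuity/support argument, so my plan is to handle each direction separately and keep the main work centered on what happens at the origin.

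\textbf{``If'' direction.} Assume $\xi\in C((0,\infty))$ has bounded support and satisfies \eqref{eq:cond_xi}. I would define $\psi(x)=\xi(|x|)x$ for $x\neq o$ and extend by $\psi(o)=o$. Continuity on $\Rn\setminus\{o\}$ is immediate from continuity of $\xi$ and of $x\mapsto|x|$. At the origin, the bound $|\psi(x)|=|\xi(|x|)|\,|x|$ together with \eqref{eq:cond_xi} yields $\psi(x)\to o=\psi(o)$ as $x\to o$. If $R>0$ is chosen so that $\xi$ vanishes on $(R,\infty)$, then $\psi$ vanishes outside the closed ball of radius $R$, so $\psi\in C_c(\Rn;\Rn)$.

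\textbf{``Only if'' direction.} Assume $\psi\in C_c(\Rn;\Rn)$ with $\psi(x)=\xi(|x|)x$ for $x\neq o$. Fix any unit vector $u\in\sn$; then $\xi(t)=\langle\psi(tu),u\rangle/t$ for $t>0$, which shows that $\xi$ is continuous on $(0,\infty)$. If $R$ is chosen so large that $\psi$ vanishes outside the ball of radius $R$, then $t\xi(t)u=\psi(tu)=o$ for $t>R$, so $\xi$ has bounded support. For the limit relation, use continuity of $\psi$ at the origin: for every $u\in\sn$,
\[
\lim_{t\to 0^+} t\xi(t)\,u=\lim_{t\to 0^+}\psi(tu)=\psi(o).
\]
For $n\geq 2$, evaluating this identity along two linearly independent directions $u$ forces both $\psi(o)=o$ and $\lim_{t\to 0^+}t\xi(t)=0$. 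For $n=1$, one gets the same conclusion by combining the limits from the right ($u=1$) and from the left ($u=-1$), since they must agree on $\psi(o)$.

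\textbf{Main obstacle.} There is no real obstacle; the only point that deserves care is deducing \eqref{eq:cond_xi} from the continuity of $\psi$ at the origin. The subtlety is that a priori $\lim_{t\to 0^+}t\xi(t)$ need not exist as a scalar limit, so one must extract it from the vector-valued limit $\lim_{t\to 0^+}\psi(tu)=\psi(o)$ by choosing enough distinct unit directions $u$ to force both the limit and $\psi(o)$ to vanish. This is where the argument splits into the cases $n\geq 2$ and $n=1$.
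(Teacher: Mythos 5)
Your proof is correct and follows essentially the same strategy as the paper: define $\psi(o)=o$ and use \eqref{eq:cond_xi} for continuity at the origin in one direction, and in the other direction read off $\xi$ from $\psi$ along a ray and use continuity of $\psi$ at $o$ along multiple directions to force $\psi(o)=o$ and hence \eqref{eq:cond_xi}. The only small difference is that the paper evaluates along the antipodal pair $\pm e_1$, which works uniformly for all $n\geq 1$, whereas you split into the cases $n\geq 2$ (two linearly independent directions) and $n=1$ ($\pm 1$); your case distinction is harmless but unnecessary, since the antipodal argument already covers $n\geq 2$ and is what you do for $n=1$ anyway.
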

    \begin{proof}
        Suppose first that $\psi$ is continuous with compact support on $\Rn$. Denoting the first coordinate of the vector-valued function $\psi$ by $\psi_1$, we see that
        \[
        \xi(t)=\xi(|t e_1|) = \frac{\psi_1(t e_1)}{t}
        \]
        for $t>0$, where $e_1$ is the first vector of the standard orthonormal basis of $\Rn$ . Thus, $\xi$ is continuous with bounded support on $(0,\infty)$. Furthermore, since $\psi$ is continuous in $o$,
        \[
        \lim\nolimits_{t\to 0^+} \xi(|t|)t e_1 =\lim\nolimits_{t\to 0^+} \psi(t e_1)=\psi(o)=\lim\nolimits_{t\to 0^+} \psi(-t e_1)= \lim\nolimits_{t\to 0^+} \xi(|t|)(-t e_1),
        \]
        which implies \eqref{eq:cond_xi}.
        
        Assume now that $\xi$ is continuous with bounded support on $(0,\infty)$ and satisfies \eqref{eq:cond_xi}. Clearly, $\psi$ is continuous with bounded support on $\Rn\setminus\{o\}$. In addition, for any sequence  $x_i$, $i\in\N$, in $\Rn$ such that $\lim_{i\to\infty} x_i=o$, we have
        \[
        \lim\nolimits_{i\to\infty} |\psi(x_i)| = \lim\nolimits_{i\to\infty} \big|\xi(|x_i|) x_i\big| = 0,
        \]
        which shows that $\psi$ continuously extends to $o$ with $\psi(o)=0$.
    \end{proof}
	
    \begin{lemma}
        \label{le:g_commutes}
        Let $\psi\in C_c(\Rn;\Rn)$. For $n\geq 3$, the map $\psi$ satisfies
        \begin{equation}
            \label{eq:g_commutes}
            \psi(\vartheta y)=\vartheta \psi(y)
        \end{equation}
        for every $\vartheta\in\SOn$ and $y\in\Rn\setminus\{o\}$, if and only if there exists $\xi\in C_b((0,\infty))$ with $\lim_{t\to 0^+} \xi(t)t=0$ such that
        \[
        \psi(y)=\xi(|y|)y
        \]
        for every $y\in\Rn\setminus\{o\}$. For $n\leq 2$, the same representation holds if $\psi$ is assumed to commute with $\On$ instead.
    \end{lemma}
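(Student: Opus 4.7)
The plan is to dispatch the \emph{if} direction directly and to prove the \emph{only if} direction via a stabilizer argument, splitting on dimension. For the \emph{if} direction, every $\vartheta \in \On$ preserves $|y|$, so $\psi(\vartheta y) = \xi(|\vartheta y|)\vartheta y = \vartheta \psi(y)$ for all $y \neq o$, which gives (\ref{eq:g_commutes}) under either $\SOn$ or $\On$.

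For the \emph{only if} direction, I would fix $y \in \Rn \setminus \{o\}$ and exploit the subgroup of the relevant group that fixes $y$. When $n \geq 3$, the stabilizer of $y$ in $\SOn$ acts on the hyperplane $y^\perp$ as $\SOnm$, and because $n-1 \geq 2$ the only vectors in $\Rn$ fixed by this action are scalar multiples of $y$; applying (\ref{eq:g_commutes}) to these stabilizer elements then forces $\psi(y) = \lambda(y)\, y$ for some $\lambda(y) \in \R$. In low dimensions, $\SOn$ is abelian or trivial, so $\SOn$-equivariance alone does not pin $\psi(y)$ to the line $\R y$ --- this is precisely why the hypothesis is strengthened to $\On$-equivariance. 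For $n = 2$, the reflection in $\Ot$ across the line $\R y$ fixes $y$ and, via (\ref{eq:g_commutes}), kills the component of $\psi(y)$ orthogonal to $y$, again yielding $\psi(y) = \lambda(y)\, y$. For $n = 1$, the relation $\psi(-y) = -\psi(y)$ coming from $\On = \{\pm 1\}$ is already equivalent to the desired radial form, with $\lambda(y) = \psi(y)/y$ for $y > 0$.

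To conclude, I would verify that $\lambda(y)$ depends only on $|y|$: for any $y'$ with $|y'| = |y|$, pick $\vartheta$ in the relevant group with $\vartheta y = y'$ and compare $\lambda(y')\, y' = \psi(y') = \vartheta \psi(y) = \lambda(y)\, y'$. Setting $\xi(t) := \lambda(t e_1)$ for $t > 0$ then produces $\psi(y) = \xi(|y|)\, y$ on $\Rn \setminus \{o\}$, and the continuity, boundedness of support, and limit condition $\lim_{t\to 0^+} \xi(t)t = 0$ required of $\xi$ follow immediately from Lemma~\ref{le:cond_xi} applied to the given $\psi \in C_c(\Rn;\Rn)$. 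The only subtlety in the whole argument is the low-dimensional bookkeeping, where the lack of nontrivial stabilizers in $\SOn$ has to be compensated by reflections; this is exactly what motivates the split hypothesis between $n \geq 3$ and $n \leq 2$.
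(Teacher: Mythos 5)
Your proof is correct and follows essentially the same route as the paper: the stabilizer of $y$ in $\SOn$ (respectively the reflection across $\R y$ in $\Ot$) forces $\psi(y)$ to lie on the line $\R y$, the dependence of the scalar only on $|y|$ follows from transitivity on spheres, and the regularity of $\xi$ is extracted via Lemma~\ref{le:cond_xi}. Your explicit use of the $\SOnm$-action on $y^\perp$ having no nonzero fixed vectors for $n\ge 3$ is a slightly more detailed articulation of the same stabilizer argument the paper uses.
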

    \begin{proof}
        If $\xi\in C_b((0,\infty))$ is such that $\lim_{t\to 0^+} \xi(t)t=0$, then it easily follows from Lemma~\ref{le:cond_xi} that $\psi(y)=\xi(|y|)y$, $y\in\Rn\setminus\{o\}$, has the desired properties.
        
        Now for $n\geq 3$, let a continuous vector-valued map $\psi$ on $\Rn$ be given that commutes with $\SOn$. If for $y\in\Rn\setminus\{o\}$ we denote
        \[
        S_y=\{\vartheta\in\SOn : \vartheta y = y\},
        \]
        then \eqref{eq:g_commutes} shows that $\vartheta \in S_y$ if and only if $\vartheta \in S_{\psi(y)}$. In particular, $\psi(y)$ and $y$ are parallel, which means that there exists $\hat{\xi}\colon (0,\infty) \times \sn\to\R$ such that
        \[
        \psi(r z)=\hat{\xi}(r,z) r z
        \]
        for $r\in(0,\infty)$ and $z\in\sn$. By \eqref{eq:g_commutes} we have
        \[
        \hat{\xi}(r,\vartheta z)r \vartheta z = \psi(r\vartheta z)=\vartheta \psi(rz)=\hat{\xi}(r,z)r\vartheta z
        \]
        for $\vartheta\in\SOn$, and thus $\hat{\xi}(r,z)$ is independent of $z\in\sn$. This means that there exists $\xi\colon (0,\infty)\to\R$ such that
        \[
        \hat{\xi}(r,z)=\xi(r)
        \]
        for $r\in(0,\infty)$ and $z\in\sn$ and, therefore, $\psi(y)=\xi(|y|)y$ for $y\in\Rn\setminus\{o\}$. The conclusion now follows from Lemma~\ref{le:cond_xi}.
        
        Next, in case $n=2$, let $\psi$ on $\R^2$ be such that $\psi(\vartheta y)=\vartheta \psi(y)$ for $\vartheta\in \Ot$ and $y\in\R^2$. Writing $\rho_y\in\Ot$ for the reflection at the line spanned by $y\in\R^2\setminus\{o\}$, we have
        \[
        \psi(y)=\psi(\rho_y y)=\rho_y \psi(y).
        \]
        This shows that $\psi(y)$ must be parallel to $y$ we can therefore use the same argumentation as in the case $n\geq 3$.
        
        Lastly, for $n=1$, we assume that the map $\psi$ satisfies $\psi(-y)=-\psi(y)$ for $y\in\R$. This is equivalent to $\psi(0)=0$ and
        \[
        \psi(y)=\frac{\psi(|y|)}{|y|}y
        \]
        for $y\neq 0$. The statement now follows with $\xi(t)=\psi(t)/t$ for $t\in(0,\infty)$.	 
    \end{proof}

    We can now prove the main result of this section, which is equivalent to Theorem~\ref{thm:class_cov_simple} and Corollary~\ref{cor:char_m_alpha_t_alpha_simple}. For $\alpha\in C_c({[0,\infty)})$ we consider the operator $\om_{\alpha}\colon\fconvs\to\Rn$, which is defined as $\om_{\alpha}(u)=\om_{\alpha}^*(u^*)$, or equivalently, by \eqref{eq:theta_conj},
    \[
    \om_{\alpha}(u)=\int_{\dom(u)} \alpha(|\nabla u(x)|) x \d x
    \]
    for $u\in\fconvs$. Furthermore, for $\xi\in C_b((0,\infty))$ such that $\lim_{t\to 0^+} \xi(t)t=0$, we define $\ot_{n,\xi}\colon\fconvs\to\Rn$ as $\ot_{n,\xi}(u)=\ot_{n,\xi}^*(u^*)$, which is equivalent to
    \[
    \ot_{n,\xi}(u)=\int_{\dom(u)} \xi(|\nabla u(x)|) \nabla u(x) \d x
    \]
    for $u\in\fconvs$.
	
    \begin{theorem}
        \label{thm:class_cov_simple_fconvs}
        A map $\oz\colon \fconvs\to\Rn$ is a continuous, translation covariant, vertically translation invariant, simple valuation, if and only if there exist functions $\psi\in C_c(\Rn;\Rn)$ and $\zeta\in C_c(\Rn)$ such that
        \begin{equation}
            \label{eq:class_cov_simple_fconvs}
            \oz(u)=\int_{\dom(u)}\psi(\nabla u(x)) \d x +  \int_{\dom(u)} \zeta(\nabla u(x)) x \d x
        \end{equation}
        for every $u\in\fconvs$. For $n\geq 3$, the valuation $\oz$ is in addition rotation equivariant, if and only if there exist functions $\alpha\in C_c({[0,\infty)})$ and $\xi\in C_b((0,\infty))$ with $\lim_{t\to 0^+} \xi(t)t=0$ such that
        \begin{equation}
            \label{eq:class_cov_equiv_simple_fconvs}
            \oz(u)=\ot_{n,\xi}(u)+\om_\alpha(u)
        \end{equation}
        for every $u\in\fconvf$. For $n\leq 2$, the same representation holds if we replace rotation equivariance with $\On$ equivariance.
    \end{theorem}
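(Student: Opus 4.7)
The second summand in \eqref{eq:class_cov_simple_fconvs} is a continuous, translation covariant, vertically translation invariant, simple valuation on $\fconvs$ by Proposition~\ref{prop:top_degree_val_fconvs}. Applying Theorem~\ref{thm:char_simple_real_val} componentwise, the first summand $u\mapsto \int_{\dom(u)}\psi(\nabla u(x))\d x$ with $\psi\in C_c(\Rn;\Rn)$ is a continuous, epi-translation invariant (and therefore translation covariant with vanishing associated map), simple valuation. Their sum yields one direction of the first part. For the equivariant statements, direct changes of variables based on $\nabla(u\circ\vartheta^{-1})(x)=\vartheta\nabla u(\vartheta^{-1}x)$ show that both $\om_\alpha$ and $\ot_{n,\xi}$ are rotation (respectively $\On$) equivariant whenever $\alpha\in C_c([0,\infty))$ and $\xi\in C_b((0,\infty))$ satisfies $\lim_{t\to 0^+}\xi(t)t=0$.

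\textbf{Necessity.} Let $\oz$ be a continuous, translation covariant, vertically translation invariant, simple valuation on $\fconvs$. By Lemma~\ref{le:properties_oz_0}, the associated map $\oz^0$ is a continuous, epi-translation invariant, simple, real-valued valuation, so Theorem~\ref{thm:char_simple_real_val} yields $\zeta\in C_c(\Rn)$ with $\oz^0(u)=\int_{\dom(u)}\zeta(\nabla u(x))\d x$. The computation in the proof of Proposition~\ref{prop:top_degree_val_fconvs} shows that the valuation $W(u):=\int_{\dom(u)}\zeta(\nabla u(x))\,x\d x$ has associated scalar map precisely $\oz^0$. Consequently, $\tilde\oz:=\oz-W$ is a continuous, simple valuation whose associated map vanishes; equivalently, $\tilde\oz$ is translation invariant, and combined with vertical translation invariance, epi-translation invariant. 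Applying Theorem~\ref{thm:char_simple_real_val} to each of the $n$ coordinates of $\tilde\oz$ produces $\psi_1,\ldots,\psi_n\in C_c(\Rn)$; collecting them into $\psi\in C_c(\Rn;\Rn)$ gives $\tilde\oz(u)=\int_{\dom(u)}\psi(\nabla u(x))\d x$, and adding back $W$ establishes \eqref{eq:class_cov_simple_fconvs}.

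\textbf{Equivariance.} Assume in addition that $\oz$ is rotation (or, for $n\leq 2$, $\On$) equivariant. Using the identity $u\circ\vartheta^{-1}\circ\tau_x^{-1}=u\circ\tau_{\vartheta^{-1}x}^{-1}\circ\vartheta^{-1}$ together with translation covariance of $\oz$ one checks that $\oz^0$ is rotation invariant. Rewriting this through the integral representation of $\oz^0$ and changing variables $x\mapsto \vartheta x$ gives
\[
\int_{\dom(u)}\big(\zeta(\nabla u(x))-\zeta(\vartheta\nabla u(x))\big)\d x=0
\]
for every $u\in\fconvs$ and every $\vartheta$ in the group. The uniqueness of the density in Theorem~\ref{thm:char_simple_real_val} then forces $\zeta=\zeta\circ\vartheta$, so $\zeta(x)=\alpha(|x|)$ for some $\alpha\in C_c([0,\infty))$ and $W=\om_\alpha$. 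The remainder $\tilde\oz=\oz-\om_\alpha$ inherits equivariance, and the analogous uniqueness argument applied componentwise yields $\psi(\vartheta y)=\vartheta\psi(y)$. Lemma~\ref{le:g_commutes} then supplies $\xi\in C_b((0,\infty))$ with $\lim_{t\to 0^+}\xi(t)t=0$ and $\psi(y)=\xi(|y|)y$, so that $\tilde\oz=\ot_{n,\xi}$ and \eqref{eq:class_cov_equiv_simple_fconvs} follows. The main technical point throughout is the uniqueness of $\zeta$ (and hence of $\psi$) in Theorem~\ref{thm:char_simple_real_val}: the entire transfer from symmetry of the valuation to symmetry of the density, which is the core of the equivariant part, rests on this injectivity.
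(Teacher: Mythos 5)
Your proof is correct and follows essentially the same route as the paper: subtract the operator $\int_{\dom(u)}\zeta(\nabla u(x))\,x\,\mathrm{d}x$ built from $\oz^0$ (via Lemma~\ref{le:properties_oz_0} and Theorem~\ref{thm:char_simple_real_val}), show the remainder is epi-translation invariant, and apply Theorem~\ref{thm:char_simple_real_val} coordinate-wise. In the equivariance part you decouple the two constraints — first deriving $\zeta\circ\vartheta=\zeta$ from rotation invariance of $\oz^0$, then subtracting $\om_\alpha$ and deriving $\psi\circ\vartheta=\vartheta\circ\psi$ from equivariance of the remainder — whereas the paper keeps them together and separates them by testing on $u=\ind_K+\langle y,\cdot\rangle$ with bodies $K$ having $m(K)=o$ resp.\ $m(K)\neq o$; the two arguments are equivalent, since the ``uniqueness of the density'' you invoke (which Theorem~\ref{thm:char_simple_real_val} does not formally state) is established by exactly those test functions, so it would be cleaner to make that one-line step explicit rather than cite it as part of the theorem.
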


    \begin{remark}
        If in Theorem~\ref{thm:class_cov_simple_fconvs} translation covariance and vertical translation invariance are replaced by epi-translation invariance, then only the valuations
        \[
        u\mapsto \int_{\dom(u)}\psi(\nabla u(x)) \d x
        \]
        and $\ot_{n,\xi}$, respectively, remain. Furthermore, let us point out that for $n\geq 2$ the representation \eqref{eq:class_cov_equiv_simple_fconvs} does not hold if one merely assumes $\SO(n)$ equivariance. For $n=1$, we have $\SO(1)=\{\operatorname{Id}\}$ and thus, also \eqref{eq:class_cov_simple_fconvs} is $\SO(1)$ equivariant. For $n=2$, let $ \Phi\colon [0,\infty)\to \SO(2)$ be continuous and let $\xi\in C_b((0,\infty))$ be such that $\lim_{t\to 0^+} \xi(t)t=0$. Since $\SO(2)$ is commutative, also the valuation
        \begin{equation}
        \label{eq:so2_equiv}
        u\mapsto \int_{\dom(u)} \xi(|\nabla u(x)|) \Phi(|\nabla u(x)|)\nabla u(x) \d x
        \end{equation}
        is $\SO(2)$ equivariant and generalizes $\ot_{n,\xi}$. In fact, one can show that every continuous, translation covariant, vertically translation invariant, $\SO(2)$ equivariant, simple valuation can be written as the sum of \eqref{eq:so2_equiv} and $\om_{\alpha}$ with some $\alpha\in C_c({[0,\infty)})$.
    \end{remark}
    \begin{proof}[Proof of Theorem~\ref{thm:class_cov_simple_fconvs}]
        For $\psi\in C_c(\Rn;\Rn)$ and $\zeta\in C_c(\Rn)$ it follows from Theorem~\ref{thm:char_simple_real_val}, applied coordinate-wise, together with Proposition~\ref{prop:top_degree_val_fconvs}
        that \eqref{eq:class_cov_simple_fconvs} defines a continuous, translation covariant, vertically translation invariant, simple valuation.
        
        Now let a continuous, translation covariant, vertically translation invariant, simple valuation $\oz\colon\fconvs\to\Rn$ be given. By Lemma~\ref{le:properties_oz_0} together with Theorem~\ref{thm:char_simple_real_val}, there exists $\zeta\in C_c(\Rn)$ such that the associated map $\oz^0$ is of the form
        \[
        \oz^0(u)=\int_{\dom(u)} \zeta(\nabla u(x))\d x
        \]
        for $u\in\fconvs$. We now define $\tilde{\oz}\colon\fconvs\to\Rn$ as
        \[
        \tilde{\oz}(u)=\oz(u)-\int_{\dom(u)} \zeta(\nabla u(x))x \d x
        \]
        for $u\in\fconvs$. For any $y\in\Rn$ we have
        \begin{align*}
            \tilde{\oz}(u\circ &\tau_{y}^{-1})\\
            &= \oz(u)+\left(\int_{\dom(u)} \zeta(\nabla u(x))\d x\right) y-\int_{\dom(u)} \zeta(\nabla u(x))x \d x - \left(\int_{\dom(u)} \zeta(\nabla u(x))\d x\right) y\\
            &=\tilde{\oz}(u),
        \end{align*}
        which together with Proposition~\ref{prop:top_degree_val_fconvs} and the properties of $\oz$ shows that $\tilde{\oz}$ is a continuous, epi-translation invariant, simple valuation on $\fconvs$. Thus, another application of Theorem~\ref{thm:char_simple_real_val}, this time applied to each of the coordinates of the vector-valued map $\tilde{\oz}$, shows there exist functions $\psi_1,\ldots,\psi_n\in C_c(\Rn)$ such that the $i$th coordinate of $\tilde{\oz}$, $i\in\{1,\ldots,n\}$, must be of the form
        \[
        u\mapsto \int_{\dom(u)} \psi_i(\nabla u(x)) \d x,
        \]
        concluding the proof of the first part of the statement.
        
        Next, let $\alpha\in C_c({[0,\infty)})$ and $\xi\in C_b((0,\infty))$ with $\lim_{t\to 0^+} \xi(t)t=0$ be given, and let $\oz\colon\fconvs\to\Rn$ be as in \eqref{eq:class_cov_equiv_simple_fconvs}. We will show that $\oz$ is $\On$ equivariant. If $u\in\fconvs$ and $\vartheta\in\On$, then
        \begin{align*}
            \ot_{n,\xi}(u\circ \vartheta^{-1})&=\int_{\dom(u\circ \vartheta^{-1})} \xi(|\nabla (u\circ \vartheta^{-1})(x)|) \nabla (u\circ \vartheta^{-1})(x) \d x\\
            &= \vartheta \int_{\dom(u\circ \vartheta^{-1})} \xi(|\vartheta \nabla u(\vartheta^{-1}x)|) \nabla u(\vartheta^{-1}x)\d x\\
            &= \vartheta \ot_{n,\xi}(u).
        \end{align*}
        Similarly, 
        \begin{align*}
            \om_{\alpha}(u\circ \vartheta^{-1}) &= \int_{\dom(u\circ \vartheta^{-1})} \alpha(|\nabla(u\circ \vartheta^{-1})(x)|)x \d x\\
            &= \int_{\dom(u\circ \vartheta^{-1})} \alpha(|\vartheta \nabla u (\vartheta^{-1}x)|) x \d x\\
            &= \vartheta \int_{\dom(u)} \alpha(|\nabla u(x)|) \d x\\
            &= \vartheta \om_{\alpha}(u),
        \end{align*}
        which shows the desired property.
        
        Conversely, assume that $\oz$ is a continuous, translation covariant, vertically translation invariant, rotation equivariant, simple valuation on $\fconvs$. By the first part of the proof there exist functions $\psi\in C_c(\Rn;\Rn)$ and $\zeta\in C_c(\Rn)$ such that
        \[
        \oz(u)=\int_{\dom(u)} \psi(\nabla u(x)) \d x + \int_{\dom(u)} \zeta(\nabla u(x))x \d x
        \]
        for every $u\in\fconvs$. By the rotation equivariance of $\oz$ we see that
        \begin{align*}
            o&=\oz(u\circ \vartheta^{-1})- \vartheta \oz(u)\\
            &=\int_{\dom(u\circ \vartheta^{-1})} \psi(\vartheta \nabla u(\vartheta^{-1}x)) \d x + \int_{\dom(u\circ \vartheta^{-1})} \zeta(\vartheta \nabla u(\vartheta^{-1}x)) x \d x\\
            &\quad - \vartheta \int_{\dom(u)} \psi(\nabla u(x)) \d x - \vartheta \int_{\dom(u)} \zeta(\nabla u(x)) x \d x\\
            &= \int_{\dom(u)} \psi(\vartheta \nabla u(x)) - \vartheta \psi(\nabla u(x)) \d x + \vartheta \int_{\dom(u)} \big(\zeta(\vartheta \nabla u(x))-\zeta(\nabla u(x)) \big)x \d x
        \end{align*}
        for $u\in\fconvs$ and $\vartheta\in \SOn$. Choosing $u=\ind_K+\langle y,\cdot\rangle$ for some $K\in\Kn$ and $y\in\Rn$ gives
        \[
        o=\vol_n(K)\big(\psi(\vartheta y)-\vartheta \psi(y)\big) + (\zeta(\vartheta y)-\zeta(y))\vartheta\, m(K).
        \]
        Thus, considering first a convex body $K$ with $\vol_n(K)>0$ and $m(K)=o$, and then a body $K$ with $m(K)\neq 0$, shows that
        \[
        \psi(\vartheta y) = \vartheta \psi(y)  \quad \text{and}\quad \zeta(\vartheta y)=\zeta(y)
        \]
        for every $\vartheta\in\SO(n)$ (or $\vartheta\in\On$ for $n\leq 2$) and $y\in\Rn$. By Lemma~\ref{le:g_commutes} this implies that there exists $\xi\in C_b((0,\infty))$ with $\lim_{t\to 0^+} \xi(t)t=0$ such that
        \[
        \psi(y)=\xi(|y|)y
        \]
        for every $y\in\Rn\setminus\{o\}$. Furthermore, it easily follows that $\zeta(y)=\alpha(|y|)$ for some $\alpha\in C_c({[0,\infty)})$, where we choose $\vartheta(y)=-y$ in case $n=1$.
    \end{proof}
    
    \section{Functional Intrinsic Moments and Minkowski Vectors}
    \label{se:intrinsic_moments_minkowski_vectors}
    Let $\alpha\in C_c({[0,\infty)})$ and let $q(x)=|x|^2/2$. Considering a Steiner-type formula, starting with the operator $\om_\alpha^*$, results in
    \begin{equation}
        \label{eq:om_alpha_steiner}
        \om_\alpha^*(v+r q)= \sum_{j=0}^n r^{n-j} \int_{\Rn} \alpha(|x|)\nabla v(x) [\Hess v(x)]_j \d x + r^{n-j+1} \int_{\Rn} \alpha(|x|)x [\Hess v(x)]_j \d x
    \end{equation}
    for $v\in\fconvf\cap C^2(\Rn)$ and $r\geq 0$. Here, we write $[A]_j$ for the \textit{$j$th elementary symmetric function} of the eigenvalues of a symmetric matrix $A\in\R^{n\times n}$, with the convention $[A]_0\equiv 1$. In particular, $[\Hess v(x)]_n=\det(\Hess v(x))$. Using appropriate Hessian measures, the above continuously extends to general $v\in\fconvf$ (see \cite[Theorem 17]{colesanti_ludwig_mussnig_4}).
	
    \medskip
	
    Equation \eqref{eq:om_alpha_steiner} gives rise to two families of operators. For $\alpha\in C_c({[0,\infty)})$ and $j\in\{0,\ldots,n\}$, we define $\oz_{j+1,\alpha}^*\colon\fconvf\to\Rn$ to be the unique continuous map such that
    \[
    \oz_{j+1,\alpha}^*(v)=\int_{\Rn} \alpha(|x|)\nabla v(x) [\Hess v(x)]_j \d x
    \]
    for $v\in\fconvf\cap C^2(\Rn)$, and we call operators of this form \textit{functional intrinsic moments}. This is motivated by the observation that when $v$ is the support function $h_K$ of a convex body $K$, then we retrieve the \textit{intrinsic moment} $z_{j+1}(K)$, which in turn is obtained as a coefficient in a Steiner formula, starting with the moment vector $m(K)$ (we refer to \cite[Section 5.4.1]{schneider_cb} for details). For the case $j=n$, this follows from \eqref{eq:retrieve_moment_vector} together with the relation $z_{n+1}(K)=m(K)$. When $j\in\{0,\ldots,n-1\}$ and $h_K$ is of class $C^2$, then
    \begin{align*}
        \oz_{j+1,\alpha}^*(h_K) &\simeq \int_{\sn} \nabla h_K(z)\,[\Hess h_K(z)]_j \d\hm^{n-j}(z)\\
        &\simeq \int_{\sn} \nabla h_K(z) \d S_j(K,z)\\
        &\simeq  \int_{\Rn} x \d C_j(K,x)\\
        &\simeq z_{j+1}(K),
    \end{align*}
    where $\simeq$ denotes equality up to constant factors, depending on $n$, $j$, and $\alpha$. Here, we write $\hm^k$ for the $k$-dimensional \textit{Hausdorff measure}, $S_j(K,\cdot)$ denotes the $j$th \textit{area measure} of $K$, and $C_j(K,\cdot)$ is its $j$th \textit{curvature measure} (see \cite[p.\ 214]{schneider_cb} for their definitions). By continuity we now retrieve $\oz_{j+1,\alpha}^*(h_K)\simeq z_{j+1}(K)$ for general $K\in\Kn$.
    
    Similar to their counterparts on convex bodies, the behavior of functional intrinsic moments under translations is described by functional intrinsic volumes. Indeed,
    \begin{equation}
        \label{eq:oz_j_alpha_transl_cov}
        \oz_{j+1,\alpha}^*(v+\langle y,\cdot \rangle) = \int_{\Rn} \alpha(|x|) \nabla v(x)[\Hess v(x)]_j \d x + y \int_{\Rn} \alpha(|x|) [\Hess v(x)]_j \d x 
    \end{equation}
    for $v\in \fconvf\cap C^2(\Rn)$ and $y\in\Rn$, where
    \[
    \oZZ{j}{\alpha}^*(v)=\int_{\Rn} \alpha(|x|) [\Hess v(x)]_j \d x
    \]
    is one of the functional intrinsic volumes that were introduced and characterized in \cite{colesanti_ludwig_mussnig_5}.
    
    \medskip
    
    The second family of operators coming from \eqref{eq:om_alpha_steiner} is of the form
    \[
    \ot_{j,\alpha}^*(v)=\int_{\Rn} \alpha(|x|)x\, [\Hess v(x)]_j \d x
    \]
    for $v\in\fconvf\cap C^2(\Rn)$, $j\in\{0,\ldots,n\}$, and $\alpha\in C_c({[0,\infty)})$. Let now $v=h_K$ with $K\in\Kn$. In case $j=n$, it follows from \eqref{eq:theta_conj} and \eqref{eq:ind_K} that
    \[
    \ot_{n,\alpha}^*(h_K)=\int_{\Rn} \alpha(|x|) x \d\MA(h_K;x) =\int_{\dom(\ind_K)} \alpha(|\nabla \ind_K(x)|) \nabla \ind_K(x) \d x = \vol_n(K) \alpha(0)\,o= o, 
    \]
    where we recall that $o\in\Rn$ denotes the origin. Furthermore, for $j\in\{0,\ldots,n-1\}$, we obtain
    \begin{equation}
        \label{eq:ot_j_alpha_h_K}
        \ot_{j,\alpha}^*(h_K)\simeq \int_{\sn} z \d S_j(K,z) = o,
    \end{equation}
    where the last equality is a general form of the Minkowski relation (see, for example, \cite[(5.30) and (5.52)]{schneider_cb}). Since \eqref{eq:ot_j_alpha_h_K} is a \textit{Minkowski tensor} of rank $1$ (the only non-vanishing Minkowski tensors of rank $1$ are the intrinsic moments; cf.\ \cite[Section 5.4.2]{schneider_cb}), we will refer to the valuations $\ot_{j,\alpha}^*$ as \textit{functional Minkowski vectors}. These operators vanish not only on support functions but also on every radially symmetric convex function, which is a consequence of the rotation equivariance of $\ot_{j,\alpha}^*$. Of course, this raises the question of whether functions $v\in\fconvf$ exist at all, such that $\ot_{j,\alpha}^*(v)\neq o$. Indeed,
    \[
    \ot_{0,\alpha}^*(v) = \int_{\Rn} \alpha(|x|) x \d x = o
    \]
    for every $v\in\fconvf$, but the remaining operators in this family generally do not vanish. For the case $j=n$, this easily follows from the fact that $\MA(h_K \circ \tau_x^{-1};\cdot)$ is a Dirac measure concentrated at $x\in\Rn$ so that
    \[
    \ot_{n,\alpha}^*(h_K\circ \tau_x^{-1}) = \vol_n(K) \alpha(|x|)x
    \]
    for every $K\in\Kn$. Explicit examples for the remaining cases will be presented in \cite{mouamine_mussnig_2}.
    
    \medskip
    
    We remark that results for functional intrinsic volumes \cite{colesanti_ludwig_mussnig_5} suggest that the operators $\oz_{j+1,\alpha}^*$ and $\ot_{j,\alpha}^*$ may also exist for densities $\alpha$ with a singularity at $0^+$, where the admissible singularities possibly depend on $j$ and $n$. Indeed, we have already seen in Corollary~\ref{cor:char_m_alpha_t_alpha_simple} that $\ot_{n,\xi}^*$ can be defined for $\xi\in C_b((0,\infty))$ such that $\lim_{t\to 0^+} \xi(t)t=0$. A treatment of the possible singularities for the operators $\ot_{j,\alpha}^*$ together with a complete characterization in a Hadwiger-type theorem will be the subject of a subsequent paper \cite{mouamine_mussnig_2}. For the functional intrinsic moments, equation \eqref{eq:oz_j_alpha_transl_cov} indicates that similar singularities may appear as for functional intrinsic volumes. However, the fact that in the case of the latter, singular integrals exist in a Riemann sense and not in a Lebesgue sense poses an additional challenge. See also \cite{knoerr_singular}.
    
    Lastly, let us point out that different functional analogs of moment vectors were previously considered, for example, for $L^p$-spaces \cite{tsang_mink} or log-concave functions \cite{mussnig_lc}.
    
    \section{Classification of Homogeneous Valuations}
    \label{se:class_hom}
    In addition to the main results presented in this article, we establish classification results for homogeneous valuations of extremal degrees. Here, $\oz\colon\fconvf\to\R$ is \textit{homogeneous} of degree $s\in\R$ if $\oz(\lambda v)=\lambda^s \oz(v)$ for every $v\in\fconvf$ and $\lambda >0$. The following decomposition result for homogeneous valuations was shown in \cite[Theorem 4]{colesanti_ludwig_mussnig_4} (and later also in \cite{knoerr_support}).
    
    \begin{theorem}
		\label{thm:mcmullen_fconvf}
		If $\oZ\colon\fconvf\to\R$ is a continuous, dually epi-translation invariant valuation, then there exist continuous, dually epi-translation invariant valuations $\oZ_0,\ldots,\oZ_n\colon\fconvf\to\R$ such that $\oZ_j$ is homogeneous of degree $j$, $j\in\{0,\ldots,n\}$, and $\oZ=\oZ_0+\cdots+\oZ_n$.
    \end{theorem}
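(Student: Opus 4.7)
The plan is to adapt McMullen's classical decomposition argument to the functional setting. The decisive step is to show that, for every fixed $v\in\fconvf$, the function $\lambda\mapsto\oZ(\lambda v)$ defined on $(0,\infty)$ coincides with the restriction of a polynomial of degree at most $n$ in $\lambda$. Granted this, fixing $n+1$ distinct positive nodes $\lambda_0,\ldots,\lambda_n$ and inverting the associated Vandermonde system produces constants $c_{jk}$ such that
\[
\oZ_j(v):=\sum_{k=0}^n c_{jk}\,\oZ(\lambda_k v),\qquad j=0,\ldots,n.
\]
Each $\oZ_j$ then inherits continuity, the valuation property, and dual epi-translation invariance from $\oZ$ by linearity. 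The identity $\oZ(\lambda v)=\sum_{j=0}^{n}\lambda^j\,\oZ_j(v)$ holds by construction and yields both the decomposition $\oZ=\oZ_0+\cdots+\oZ_n$ (setting $\lambda=1$) and the homogeneity $\oZ_j(\lambda v)=\lambda^j\oZ_j(v)$ (by comparing coefficients after the substitution $v\mapsto \lambda v$).

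To establish the polynomiality, I would pass to the dual side via the Legendre transform. The operator $\oZ^\vee(u):=\oZ(u^*)$ is a continuous, epi-translation invariant valuation on $\fconvs$, and the scaling $v\mapsto \lambda v$ on $\fconvf$ corresponds under conjugation to the epi-multiplication $u\mapsto \lambda\,u(\cdot/\lambda)$ on $\fconvs$. Writing $u_1\sq u_2$ for infimal convolution (the Legendre dual of pointwise addition), I would prove the stronger Minkowski-type statement that, for every finite family $u_1,\ldots,u_m\in\fconvs$, the map
\[
(\lambda_1,\ldots,\lambda_m)\longmapsto \oZ^\vee\bigl(\lambda_1 u_1(\cdot/\lambda_1)\sq\cdots\sq\lambda_m u_m(\cdot/\lambda_m)\bigr)
\]
is a polynomial of total degree at most $n$. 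Restricted to indicator functions of polytopes $u_i=\ind_{P_i}$, one has $\lambda_i\,\ind_{P_i}(\cdot/\lambda_i)=\ind_{\lambda_i P_i}$ and $\ind_{Q_1}\sq\cdots\sq\ind_{Q_m}=\ind_{Q_1+\cdots+Q_m}$, so the claim reduces to the classical polynomiality of $(\lambda_1,\ldots,\lambda_m)\mapsto \oZ^\vee(\ind_{\lambda_1 P_1+\cdots+\lambda_m P_m})$, which is McMullen's polynomial expansion applied to the continuous, translation invariant valuation $K\mapsto \oZ^\vee(\ind_K)$ on convex bodies.

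The main obstacle is propagating this identity from polytope indicators to arbitrary $u_i\in\fconvs$. This requires density of polytope indicators in $\fconvs$ with respect to epi-convergence together with sufficient continuity of iterated infimal convolution in this topology, so that the polynomial identity survives sequential limits while preserving the degree bound. The super-coercivity built into $\fconvs$ is essential here, as it ensures compactness of sublevel sets and the well-posedness and continuity of infimal convolutions on the relevant subclasses. Once the functional polynomiality has been secured on all of $\fconvs$, conjugating back to $\fconvf$ yields the required polynomial dependence of $\lambda\mapsto\oZ(\lambda v)$, and the decomposition follows as outlined in the first paragraph.
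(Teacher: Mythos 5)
The paper itself does not prove Theorem~\ref{thm:mcmullen_fconvf}; it is quoted from \cite[Theorem~4]{colesanti_ludwig_mussnig_4} (with \cite{knoerr_support} as an alternative reference), so there is no in-paper argument to compare against. Judged on its own terms, your reduction scheme is sound: granted the polynomiality of $\lambda\mapsto\oZ(\lambda v)$ with degree bound $n$, the Vandermonde extraction produces homogeneous components $\oZ_j$, each of which inherits continuity, the valuation property, and dual epi-translation invariance by linearity (for the last, use $\lambda_k(v+\ell+c)=\lambda_k v+\lambda_k\ell+\lambda_k c$), and the closing identities are as you describe. The passage to $\fconvs$ via conjugation and the identification of the scaling $v\mapsto\lambda v$ with epi-multiplication are also correct.

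The gap lies in your proposed proof of the polynomiality, and it is substantive. Polytope indicators are \emph{not} dense in $\fconvs$ under epi-convergence: an epi-limit of functions $\ind_{P_k}$ can only take the values $0$ and $\infty$, so the epi-closure of this class inside $\fconvs$ again consists of indicator functions of convex bodies. A generic super-coercive function such as $u(x)=|x|^2$ cannot be approximated in this way, so the identity you obtain on polytope indicators from McMullen's classical expansion for $K\mapsto\oZ^\vee(\ind_K)$ has no route to propagate to all of $\fconvs$ by continuity. The dense class one actually needs is the piecewise affine super-coercive convex functions with polytope domains (equivalently, on the $\fconvf$ side, finite piecewise affine convex functions; their epi-graphs are polyhedra in $\R^{n+1}$). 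This class is closed under epi-multiplication and infimal convolution, but it is strictly larger than the indicator class, and for its members the restricted valuation $K\mapsto\oZ^\vee(\ind_K)$ on convex bodies sees only a small part of the data. Establishing the degree-$n$ polynomial expansion there requires a genuinely functional, combinatorial argument rather than a reduction to convex bodies; this is exactly the nontrivial content of the proof in \cite{colesanti_ludwig_mussnig_4}. A minor notational point: you write $u_1\sq u_2$ for infimal convolution, while the paper reserves $\sq$ for epi-multiplication, so a different symbol should be used to avoid confusion.
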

    
    Similarly to Theorem~\ref{thm:mcmullen_fconvf}, it follows from recent results on polynomial valuations \cite[Theorem 1.3]{knoerr_ulivelli_polynomial}, that every continuous, dually translation covariant, vertically translation invariant valuation can be written as a sum of homogeneous valuations with degrees ranging from $0$ to $(n+1)$.
	
    \paragraph{Homogeneity of Degree $\boldsymbol{0}$}
    We start with valuations that are homogeneous of degree $0$. For this we will use the following result from \cite[Theorem 26]{colesanti_ludwig_mussnig_4}
    
    \begin{theorem}
        \label{thm:class_zero}
        A map $\oZ\colon\fconvf\to\R$ is a continuous, dually epi-translation invariant valuation that is homogeneous of degree $0$, if and only if $\oZ$ is constant.
    \end{theorem}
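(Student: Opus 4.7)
The plan is to settle the ``if'' direction by a one-line observation, and to attack the ``only if'' direction by exploiting homogeneity of degree $0$ together with continuity through a scaling limit. First I would note that any constant map is trivially a continuous, dually epi-translation invariant valuation on $\fconvf$ that is homogeneous of degree $0$, which gives one implication.

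For the converse, I would fix an arbitrary $v\in\fconvf$ and consider the family of rescalings $\lambda v$ for $\lambda>0$. Since $v$ is finite-valued on $\Rn$, one has $\lambda v(x)\to 0$ for every $x\in\Rn$ as $\lambda\to 0^+$. Because epi-convergence on $\fconvf$ coincides with pointwise convergence (as recalled in Section~\ref{se:preliminaries}) and because the zero function lies in $\fconvf$, this yields $\lambda v\to 0$ in the topology of $\fconvf$. On the other hand, homogeneity of degree $0$ gives $\oZ(\lambda v)=\lambda^0\,\oZ(v)=\oZ(v)$ for every $\lambda>0$, and passing to the limit by continuity of $\oZ$ produces
\[
\oZ(v)=\lim_{\lambda\to 0^+}\oZ(\lambda v)=\oZ(0).
\]
Since $v$ was arbitrary, $\oZ$ is the constant map equal to $\oZ(0)$, completing the argument.

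There is no real obstacle here. In fact, neither the valuation property nor dual epi-translation invariance is used in the proof; only continuity and homogeneity of degree $0$ are needed. These extra hypotheses are included because the theorem is naturally situated in the paper's broader classification program for continuous, dually epi-translation invariant valuations, where Theorem~\ref{thm:class_zero} identifies the constants as the unique such valuations of homogeneity degree $0$.
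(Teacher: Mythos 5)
The paper does not give its own proof of this result; it cites it directly from \cite[Theorem 26]{colesanti_ludwig_mussnig_4}, so there is no internal argument to compare against. Judged on its own terms, your proof is correct. For fixed $v\in\fconvf$ the rescalings $\lambda v$ belong to $\fconvf$ for $\lambda>0$ and converge pointwise to the zero function as $\lambda\to 0^+$; since on $\fconvf$ pointwise convergence is the same as epi-convergence (as recalled in Section~\ref{se:preliminaries}), continuity of $\oZ$ gives $\oZ(\lambda v)\to\oZ(0)$, while homogeneity of degree $0$ gives $\oZ(\lambda v)=\oZ(v)$ for all $\lambda>0$, so $\oZ(v)=\oZ(0)$. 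Your observation that the valuation property and dual epi-translation invariance are never invoked is accurate: the conclusion follows from continuity and homogeneity of degree $0$ alone, so the theorem's hypotheses are stronger than needed for this specific statement, and are listed to match the classification program in which the result sits.
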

    
	
    
    Next, we establish an elementary result, similar to Lemma~\ref{le:properties_oz_0}.
    
    \begin{lemma}
        \label{le:oz_0_hom}
        If $\oz\colon\fconvf\to\Rn$ is a dually translation covariant valuation that is homogeneous of degree $s+1$, then the associated map $\oz^0\colon\fconvf\to\R$ is homogeneous of degree $s$.
    \end{lemma}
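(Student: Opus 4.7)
The plan is to argue directly from the two defining identities, using a single algebraic manipulation to expose the homogeneity of $\oz^0$. The key observation is that the additive shift by a linear functional interacts nicely with scalar multiplication, namely
\[
\lambda v + \langle x, \cdot \rangle = \lambda \bigl(v + \langle x/\lambda, \cdot\rangle\bigr)
\]
for every $\lambda>0$, $v\in\fconvf$, and $x\in\Rn$. This lets us expand $\oz(\lambda v + \langle x, \cdot\rangle)$ in two different ways.

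First I would apply the dual translation covariance of $\oz$ directly to $\lambda v$, obtaining
\[
\oz\bigl(\lambda v + \langle x, \cdot\rangle\bigr) = \oz(\lambda v) + \oz^0(\lambda v)\, x = \lambda^{s+1}\oz(v) + \oz^0(\lambda v)\, x,
\]
where in the last step I use the assumed $(s+1)$-homogeneity of $\oz$. Second, I would rewrite the argument via the identity above and then apply homogeneity followed by dual translation covariance:
\[
\oz\bigl(\lambda v + \langle x, \cdot \rangle\bigr) = \oz\bigl(\lambda(v + \langle x/\lambda, \cdot\rangle)\bigr) = \lambda^{s+1}\oz\bigl(v + \langle x/\lambda, \cdot\rangle\bigr) = \lambda^{s+1}\oz(v) + \lambda^{s}\oz^0(v)\, x.
\]
Comparing the two expressions and canceling the common term $\lambda^{s+1}\oz(v)$, I would conclude that $\oz^0(\lambda v)\, x = \lambda^s\oz^0(v)\, x$ for every $x\in\Rn$. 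Taking, say, $x=e_1$ (or any non-zero vector) then yields $\oz^0(\lambda v) = \lambda^s \oz^0(v)$, as desired.

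There is essentially no obstacle here: the statement is a formal consequence of the compatibility between the scaling action $v\mapsto \lambda v$ and the affine shift $v\mapsto v + \langle x, \cdot\rangle$, together with the uniqueness of the map $\oz^0$ in the defining relation of dual translation covariance. The only place to be a little careful is that one must use homogeneity at the scaled translation vector $x/\lambda$ rather than at $x$ itself, which is precisely what produces the extra factor of $\lambda^{-1}$ responsible for reducing the degree from $s+1$ to $s$.
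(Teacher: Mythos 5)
Your proof is correct and follows essentially the same approach as the paper: both compute $\oz$ on a scaled-plus-shifted argument in two ways, using the interchange $\lambda v + \langle x,\cdot\rangle = \lambda\bigl(v + \langle x/\lambda,\cdot\rangle\bigr)$ (equivalently, $\lambda(v+\langle x,\cdot\rangle) = \lambda v + \langle\lambda x,\cdot\rangle$), and compare coefficients of $x$. The only cosmetic difference is the direction of the rescaling of the shift vector; the underlying calculation is the same.
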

    \begin{proof}
        Let $\oz$ be given. We have
        \begin{align*}
            \lambda^{s+1}\oz(v)+\lambda^{s+1}\oz^0(v)x &= \oz(\lambda (v+\langle x,\cdot\rangle ))\\
            &=\oz(\lambda v+\langle \lambda x,\cdot \rangle)\\
            &=\oz(\lambda v) + \oz(\lambda v) \lambda x\\
            &=\lambda^{s+1}\oz(v)+ \lambda \oz(\lambda v) x
        \end{align*}
        for $v\in\fconvf$, $x\in\Rn$, and $\lambda > 0$. Therefore, $\oz^0$ is homogeneous of degree $s$.
    \end{proof}

    It is now straightforward to show the following classification.
    
    \begin{theorem}
        A map $\oz\colon\fconvf\to\Rn$ is a continuous, dually translation covariant, vertically translation invariant valuation that is homogeneous of degree $0$, if and only if there exists $t\in \Rn$ such that $\oz(v)=t$ for every $v\in\fconvf$. For $n\geq 2$, the valuation $\oz$ is in addition rotation equivariant, if and only if it is identically $o$. For $n= 1$, the same representation holds if $\oz$ is reflection equivariant.
    \end{theorem}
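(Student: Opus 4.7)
The forward direction is trivial---a constant map $v\mapsto t$ is continuous, homogeneous of degree $0$, vertically translation invariant, and dually translation covariant with associated map identically zero---so the plan focuses on the converse. The strategy is to force the associated scalar map $\oz^0$ to vanish, after which each coordinate of $\oz$ becomes a continuous, dually epi-translation invariant, degree-$0$ homogeneous valuation, and Theorem~\ref{thm:class_zero} finishes the job coordinate-wise.

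First I would observe that the proof of Lemma~\ref{le:properties_oz_0} carries over verbatim from $\fconvs$ to $\fconvf$ (with ``translation invariant'' replaced by ``dually translation invariant''), yielding that $\oz^0\colon\fconvf\to\R$ is itself a continuous, dually epi-translation invariant valuation. Lemma~\ref{le:oz_0_hom}, applied with $s+1=0$, then shows that $\oz^0$ is homogeneous of degree $-1$.

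The key step is to use this exotic degree of homogeneity to force $\oz^0\equiv 0$. For any $v\in\fconvf$, the rescaled functions $\lambda v$ converge pointwise (and hence epi-converge, since these notions coincide on $\fconvf$) to the zero function as $\lambda\to 0^+$. Continuity of $\oz^0$ therefore yields $\lim_{\lambda\to 0^+}\oz^0(\lambda v)=\oz^0(0)\in\R$, while the homogeneity identity $\oz^0(\lambda v)=\lambda^{-1}\oz^0(v)$ admits a finite limit as $\lambda\to 0^+$ only when $\oz^0(v)=0$. Hence $\oz^0\equiv 0$, so dual translation covariance collapses into dual translation invariance; combined with vertical translation invariance this makes $\oz$ dually epi-translation invariant, and applying Theorem~\ref{thm:class_zero} to each of the $n$ coordinates (which remain homogeneous of degree $0$) gives $\oz\equiv t$ for some $t\in\Rn$.

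Finally, for the equivariant addendum, rotation equivariance evaluated on the constant $\oz\equiv t$ forces $t=\vartheta t$ for every $\vartheta\in\SOn$; for $n\geq 2$ the only such vector is $t=o$. For $n=1$, reflection equivariance similarly gives $t=-t$, and again $t=o$. I expect the only mildly delicate step to be the continuity/homogeneity argument pinning $\oz^0$ down from its degree $-1$ homogeneity; everything else is bookkeeping.
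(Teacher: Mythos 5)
Your proposal is correct and follows the same overall skeleton as the paper's proof: use Lemmas~\ref{le:properties_oz_0} and~\ref{le:oz_0_hom} to identify $\oz^0$ as a continuous, dually epi-translation invariant valuation that is homogeneous of degree $-1$, conclude $\oz^0\equiv 0$, then apply Theorem~\ref{thm:class_zero} coordinate-wise, and finally kill the constant $t$ via the equivariance relation $t=\vartheta t$. The one place where you genuinely diverge is the step forcing $\oz^0\equiv 0$. The paper invokes the homogeneous decomposition theorem (Theorem~\ref{thm:mcmullen_fconvf}): since a continuous, dually epi-translation invariant valuation is a sum of components of degrees $0$ through $n$, there is no room for a nonzero degree $-1$ piece. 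You instead give a direct, self-contained argument: $\lambda v$ epi-converges (pointwise) to the zero function as $\lambda\to 0^+$, so continuity forces a finite limit of $\oz^0(\lambda v)$, while degree $-1$ homogeneity gives $\oz^0(\lambda v)=\lambda^{-1}\oz^0(v)$, which can stay bounded only if $\oz^0(v)=0$. Your route is more elementary and avoids the appeal to Theorem~\ref{thm:mcmullen_fconvf} (although one could argue the scaling estimate is exactly the shadow of that theorem's degree restriction); the paper's route is shorter given that it has that theorem on hand. Both are valid, and your handling of the equivariance addendum and the transfer of Lemma~\ref{le:properties_oz_0} from $\fconvs$ to $\fconvf$ are fine.
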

    \begin{proof}
        If $\oz$ is as in the statement, then Lemma~\ref{le:properties_oz_0} and Lemma~\ref{le:oz_0_hom} show that the associated map $\oz_0$ must be a continuous, dually epi-translation invariant valuation that is homogeneous of degree $-1$. By Theorem~\ref{thm:mcmullen_fconvf} we must have $\oz^0\equiv 0$, and thus, $\oz$ is dually translation invariant (and therefore also dually epi-translation invariant). The result now follows from applying Theorem~\ref{thm:class_zero} to each coordinate of $\oz$.
    \end{proof}
	
    \paragraph{Homogeneity of Top Degrees}	
    Let us now turn our attention to valuations that are homogeneous of top degrees. As in Section~\ref{se:class_simple}, we will continue to work on the space $\fconvs$ for the remainder of this section. We say that $\oz\colon\fconvs\to\Rn$ is \textit{epi-homogeneous} of degree $s\in\R$ if $\oz(\lambda \sq u)=\lambda^s \oz(u)$ for every $u\in\fconvs$ and $\lambda >0$. Here, $\lambda\sq u$ is the \textit{epi-multiplication} of $u$ with $\lambda$, which is defined as
    \[
    \lambda\sq u(x)=\lambda\, u\left( \frac x\lambda \right)
    \]
    for $x\in\R^n$. We remark that this definition continuously extends to $0\sq u=\ind_{\{o\}}$. This is equivalent to $(\lambda \sq u)^* = \lambda u^*$ and $u\mapsto \oz(u)$ is epi-homogeneous, if and only if $v\mapsto \oz(v^*)$ is homogeneous.
    
    \medskip
    
    We start with epi-translation invariant valuations. We will use the following result from \cite[Theorem 2]{colesanti_ludwig_mussnig_4}.
	
    \begin{theorem}
        \label{thm:char_top_degree_real_val}
        A map $\oZ\colon\fconvs\to\R$ is a continuous, epi-translation invariant valuation that is epi-homogeneous of degree $n$, if and only if there exists a function $\zeta\in C_c(\Rn)$ such that
        \[
        \oZ(u)=\int_{\dom(u)} \zeta(\nabla u(x))\d x
        \]
        for every $u\in\fconvs$.
    \end{theorem}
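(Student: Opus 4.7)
The plan is to reduce the classification to the Klain--Schneider theorem on $\fconvs$ (Theorem~\ref{thm:char_simple_real_val}) by establishing that every $\oZ$ satisfying the hypotheses is automatically simple. The converse direction is essentially a verification: given $\zeta \in C_c(\Rn)$, Theorem~\ref{thm:char_simple_real_val} already guarantees that the integral formula defines a continuous, epi-translation invariant valuation on $\fconvs$, and epi-homogeneity of degree $n$ follows from the substitution $y = x/\lambda$ in
\[
\oZ(\lambda \sq u) = \int_{\lambda \dom(u)} \zeta(\nabla u(x/\lambda))\d x,
\]
using $\nabla(\lambda \sq u)(x) = \nabla u(x/\lambda)$ and $\dom(\lambda \sq u) = \lambda \dom(u)$.

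For the hard direction, let $\oZ$ satisfy the assumptions. The goal is to show that $\oZ(u) = 0$ whenever $\dim \dom(u) < n$. By the epi-translation invariance of $\oZ$, it suffices to treat $u \in \fconvs$ whose domain lies in a fixed linear subspace $H \subseteq \Rn$ of dimension $k \in \{0, \ldots, n-1\}$. For such $u$, write $u = \tilde{w}$, where $w \in \operatorname{Conv}_{\mathrm{sc}}(H)$ and $\tilde w$ denotes the extension by $+\infty$ off $H$. One checks that $w \mapsto \tilde w$ is a bijection between $\operatorname{Conv}_{\mathrm{sc}}(H)$ and the subset of $\fconvs$ of functions with domain in $H$, that it is continuous with respect to epi-convergence, and that it commutes with pointwise max and min, with epi-translations in $H$, with vertical translations, and with epi-multiplication (the identity $\widetilde{\lambda \sq w} = \lambda \sq \tilde w$ is immediate since both sides equal $\lambda w(\cdot/\lambda)$ on $H$ and $+\infty$ elsewhere).

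Consequently, $\tilde \oZ(w) := \oZ(\tilde w)$ is a continuous, epi-translation invariant valuation on $\operatorname{Conv}_{\mathrm{sc}}(H) \cong \operatorname{Conv}_{\mathrm{sc}}(\R^k)$ that is epi-homogeneous of degree $n$. The dual version of Theorem~\ref{thm:mcmullen_fconvf}, applied in $\R^k$, gives a decomposition of $\tilde \oZ$ into homogeneous components of degrees in $\{0, 1, \ldots, k\}$. Since $k < n$, the only $n$-homogeneous member of this family is the zero valuation, whence $\tilde \oZ \equiv 0$. This shows $\oZ(u) = 0$ for every $u$ with $\dim \dom(u) < n$, so $\oZ$ is simple, and Theorem~\ref{thm:char_simple_real_val} produces $\zeta \in C_c(\Rn)$ with the desired representation.

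The main obstacle is the verification that the lifting $w \mapsto \tilde w$ is continuous with respect to epi-convergence, so that $\tilde \oZ$ actually inherits continuity from $\oZ$; all other compatibilities (valuation, epi-translation, epi-multiplication, epi-homogeneity) are formal once the lifting is set up correctly. Beyond this dimensional reduction step, the argument is entirely mechanical, with McMullen-type decomposition providing the degree clash and the Klain--Schneider theorem on $\fconvs$ supplying the final formula.
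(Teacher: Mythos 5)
This theorem is stated in the paper as a citation from \cite[Theorem 2]{colesanti_ludwig_mussnig_4}, with no internal proof to compare against. Your proposal derives it from Theorem~\ref{thm:char_simple_real_val} (Klain--Schneider on $\fconvs$) together with the homogeneous decomposition Theorem~\ref{thm:mcmullen_fconvf}, by showing that $n$-epi-homogeneity forces simplicity. The dimensional reduction is correct: if $\dim\dom(u) = k < n$, then after an epi-translation one may assume $\dom(u)$ lies in a linear subspace $H\cong\R^k$; the extension-by-$+\infty$ map from $\operatorname{Conv}_{\mathrm{sc}}(H)$ into $\fconvs$ preserves $\vee$, $\wedge$, epi-translations within $H$, and epi-multiplication, so the restriction $\tilde\oZ$ is a continuous, epi-translation invariant valuation on $\operatorname{Conv}_{\mathrm{sc}}(\R^k)$ that is $n$-epi-homogeneous, and by the decomposition in dimension $k$ into degrees $0,\dots,k$ the degree mismatch $n>k$ gives $\tilde\oZ\equiv 0$. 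The continuity of the lifting, which you flag as the main obstacle, is readily verified: if $w_i\to w$ epi-converges in $\operatorname{Conv}_{\mathrm{sc}}(H)$, then $\epi(\tilde w_i)=\epi(w_i)$, regarded as a subset of $\Rn\times\R$, converges to $\epi(\tilde w)$ in the Painlev\'e--Kuratowski sense because $H\times\R$ is a closed subspace of $\Rn\times\R$ and set convergence is preserved under the inclusion; alternatively one can use the characterization via Hausdorff convergence of sublevel sets from \cite{colesanti_ludwig_mussnig_1}.

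There is, however, a circularity concern you should be aware of: Theorem~\ref{thm:char_simple_real_val} comes from \cite{colesanti_ludwig_mussnig_8}, which postdates \cite{colesanti_ludwig_mussnig_4} and whose proof of that Klain--Schneider result itself invokes the top-degree characterization (Theorem~2 of \cite{colesanti_ludwig_mussnig_4}) after reducing to the $n$-homogeneous component. The original proof of Theorem~\ref{thm:char_top_degree_real_val} therefore does not and cannot pass through Klain--Schneider; it constructs the density $\zeta$ directly. What your argument genuinely adds is the converse implication ``$n$-epi-homogeneous implies simple,'' a clean observation that, together with the KS theorem, recovers the representation. But as a self-contained, non-circular proof of the stated theorem it does not stand on its own, because it routes through a later result whose proof depends on what you are proving.
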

    
    We can now prove the following classification result for epi-translation invariant valuations that are epi-homogeneous of degree $n$.
    
    \begin{theorem}
        \label{thm:t_alpha_fconvs}
        A map $\oz\colon\fconvs\to\Rn$ is a continuous, epi-translation invariant valuation that is epi-homogeneous of degree $n$, if and only if there exists a function $\psi\in C_c(\Rn;\Rn)$ such that
        \[
        \oz(u)=\int_{\dom(u)} \psi(\nabla u(x))\d x
        \]
        for every $u\in\fconvs$. For $n\geq 3$, the valuation $\oz$ is in addition rotation equivariant, if and only if there exists a function $\xi\in C_b((0,\infty))$ with $\lim_{t\to 0^+} \xi(t)t=0$ such that
        \[
        \oz(u)=\ot_{n,\xi}(u)
        \]
        for every $u\in\fconvs$. For $n\leq 2$, the same representation holds if we replace rotation equivariance with $\On$ equivariance.
    \end{theorem}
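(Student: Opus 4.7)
The plan is to reduce the classification to the scalar case covered by Theorem~\ref{thm:char_top_degree_real_val} by a coordinate-wise argument, and then treat the rotation-equivariant refinement by testing on a convenient family of functions exactly as in the second half of the proof of Theorem~\ref{thm:class_cov_simple_fconvs}.

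For the ``if'' direction, given $\psi = (\psi_1,\ldots,\psi_n)\in C_c(\Rn;\Rn)$, each coordinate $u\mapsto \int_{\dom(u)}\psi_i(\nabla u(x))\d x$ is, by Theorem~\ref{thm:char_top_degree_real_val}, a continuous, epi-translation invariant, real-valued valuation that is epi-homogeneous of degree $n$; hence the vector-valued operator $\oz$ inherits all three properties. Conversely, if $\oz\colon\fconvs\to\Rn$ is a continuous, epi-translation invariant valuation that is epi-homogeneous of degree $n$, then each coordinate $\oz_i\colon\fconvs\to\R$ has the same properties as a real-valued valuation, so Theorem~\ref{thm:char_top_degree_real_val} produces $\psi_i\in C_c(\Rn)$ with $\oz_i(u)=\int_{\dom(u)}\psi_i(\nabla u(x))\d x$. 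Assembling the $\psi_i$ into a single vector-valued map $\psi\in C_c(\Rn;\Rn)$ yields the desired representation.

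For the rotation-equivariant refinement, I would evaluate this representation on test functions of the form $u=\ind_K+\langle y,\cdot\rangle$ with $K\in\Kn$ of positive volume and $y\in\Rn$. These lie in $\fconvs$ because $\dom(u)=K$ is compact, so the super-coercivity requirement is vacuous, and on the interior of $K$ one has $\nabla u \equiv y$, so $\oz(u)=\vol_n(K)\,\psi(y)$. Replacing $u$ by $u\circ\vartheta^{-1}=\ind_{\vartheta K}+\langle \vartheta y,\cdot\rangle$ and invoking rotation equivariance (respectively $\On$ equivariance for $n\leq 2$) yields $\psi(\vartheta y)=\vartheta\psi(y)$ for every admissible $\vartheta$ and every $y\in\Rn$. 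Lemma~\ref{le:g_commutes} then provides $\xi\in C_b((0,\infty))$ with $\lim_{t\to 0^+}\xi(t)t=0$ such that $\psi(y)=\xi(|y|)y$ for every $y\in\Rn\setminus\{o\}$, and this rewrites the integral representation exactly as $\oz=\ot_{n,\xi}$.

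The heavy lifting has already been done in Theorem~\ref{thm:char_top_degree_real_val} and Lemma~\ref{le:g_commutes}; there is no serious obstacle left. The only bookkeeping is to confirm the admissibility of the test functions $\ind_K+\langle y,\cdot\rangle$ in $\fconvs$ and to recall that the gradient on the interior of $\dom(u)$ is well-defined almost everywhere by convexity and equals $y$ on that set. In this sense Theorem~\ref{thm:t_alpha_fconvs} follows as a direct vector-valued extension of the scalar result, with the rotation argument being essentially a reprise of the one already carried out for the simple case.
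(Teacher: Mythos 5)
Your proof is correct and follows essentially the same route as the paper: a coordinate-wise application of Theorem~\ref{thm:char_top_degree_real_val} for the first part, then testing rotation (or $\On$) equivariance on the functions $\ind_K+\langle y,\cdot\rangle$ to obtain $\psi(\vartheta y)=\vartheta\psi(y)$ and applying Lemma~\ref{le:g_commutes}. The paper handles the rotation step by referring back to the proof of Theorem~\ref{thm:class_cov_simple_fconvs}, where the same test-function computation is carried out.
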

    \begin{proof}
        The first part of the statement immediately follows from Theorem~\ref{thm:char_top_degree_real_val}, applied coordinate-wise. Next, for $\xi\in C_b((0,\infty))$ with $\lim_{t\to 0^+} \xi(t)t=0$, we have already seen in Theorem~\ref{thm:class_cov_simple_fconvs} that $\ot_{n,\xi}$ is a continuous, epi-translation invariant valuation with the desired equivariance properties. Furthermore, Theorem~\ref{thm:char_top_degree_real_val} shows that $\ot_{n,\xi}$ is epi-homogeneous of degree $n$.
        
        If now $\oz\colon\fconvs\to\Rn$ is a continuous, epi-translation invariant, rotation equivariant valuation that is epi-homogeneous of degree $n$, then Theorem~\ref{thm:char_top_degree_real_val} shows that the $i$th coordinate of $\oz$ must be of the form
        \[
        u\mapsto \int_{\dom(u)} \zeta_i(\nabla u(x)) \d x
        \]
        for some $\zeta_i\in C_c(\Rn)$, $i\in\{0,\ldots,n\}$. Exploiting the rotation equivariance of $\oz$ (or $\On$ equivariance for $n\leq 2$), as in the proof of Theorem~\ref{thm:class_cov_simple_fconvs}, shows that $\oz=\ot_{n,\xi}$ for some $\xi\in C_b((0,\infty))$ such that $\lim_{t\to 0^+} \xi(t)t=0$.
    \end{proof}

    Before we continue with the translation covariant valuation, we state the dual result to Theorem~\ref{thm:t_alpha_fconvs} on $\fconvf$.
    
    \begin{theorem}
        \label{thm:t_alpha}
        A map $\oz\colon\fconvf\to\Rn$ is a continuous, dually epi-translation invariant valuation that is homogeneous of degree $n$, if and only if there exists a function $\psi\in C_c(\Rn;\Rn)$ such that
        \[
        \oz(v)=\int_{\Rn} \psi(x)\d\MA(v;x)
        \]
        for every $v\in\fconvf$. For $n\geq 3$, the valuation $\oz$ is in addition rotation equivariant, if and only if there exists a function $\xi\in C_b((0,\infty))$ with $\lim_{t\to 0^+} \xi(t)t=0$ such that
        \[
        \oz(v)=\ot_{n,\xi}^*(v)
        \]
        for every $v\in\fconvf$. For $n\leq 2$, the same representation holds if we replace rotation equivariance with $\On$ equivariance.
    \end{theorem}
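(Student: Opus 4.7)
The plan is to derive Theorem~\ref{thm:t_alpha} directly from its dual counterpart Theorem~\ref{thm:t_alpha_fconvs} via the Legendre--Fenchel transform, mirroring how Theorem~\ref{thm:class_cov_simple} is obtained from Theorem~\ref{thm:class_cov_simple_fconvs} in Section~\ref{se:class_simple}.

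For the sufficiency direction, given $\psi\in C_c(\Rn;\Rn)$, I would verify that $v\mapsto \int_{\Rn}\psi(x)\d\MA(v;x)$ has the required properties by applying Theorem~\ref{thm:theta} coordinate-wise (using that $\MA(v;\cdot)$ is the first marginal of $\Theta_0(v;\cdot)$, so $\int_{\Rn}\psi_i(x)\d\MA(v;x)=\int_{\Rn\times\Rn}\psi_i(x)\d\Theta_0(v;(x,y))$ with $\psi_i$ compactly supported in the first variable). Dual epi-translation invariance is immediate from the fact that adding an affine function to $v$ leaves $\Hess v$, and hence $\MA(v;\cdot)$, unchanged. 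For homogeneity of degree $n$, one checks on $\fconvf\cap C^2(\Rn)$ that $\det(\Hess(\lambda v))=\lambda^n\det(\Hess v)$ gives $\MA(\lambda v;\cdot)=\lambda^n \MA(v;\cdot)$, and then extends this identity to all of $\fconvf$ by the continuity established in \eqref{eq:ma_det}.

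For the necessity direction, given a continuous, dually epi-translation invariant, $n$-homogeneous valuation $\oz\colon\fconvf\to\Rn$, I would define $\tilde\oz\colon\fconvs\to\Rn$ by $\tilde\oz(u)=\oz(u^*)$. The continuity of convex conjugation between $\fconvs$ and $\fconvf$ and the fact that conjugation interchanges pointwise suprema and infima imply that $\tilde\oz$ is a continuous valuation on $\fconvs$; dual epi-translation invariance of $\oz$ translates to epi-translation invariance of $\tilde\oz$; and the identity $(\lambda\sq u)^* = \lambda u^*$ yields $\tilde\oz(\lambda\sq u)=\oz(\lambda u^*)=\lambda^n\oz(u^*)=\lambda^n\tilde\oz(u)$, so $\tilde\oz$ is epi-homogeneous of degree $n$. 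Theorem~\ref{thm:t_alpha_fconvs} then produces $\psi\in C_c(\Rn;\Rn)$ with $\tilde\oz(u)=\int_{\dom(u)}\psi(\nabla u(x))\d x$. Applying the vector-valued version of the identity $\int_{\Rn}\zeta(x)\d\MA(u^*;x)=\int_{\dom(u)}\zeta(\nabla u(x))\d x$ recalled in Section~\ref{se:preliminaries} coordinate-wise with $u=v^*$ converts this to $\oz(v)=\int_{\Rn}\psi(x)\d\MA(v;x)$.

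For the equivariant statement, the relation $(v\circ\vartheta^{-1})^*=v^*\circ\vartheta^{-1}$ for $\vartheta\in\On$ (which follows from a direct computation since $\vartheta^{T}=\vartheta^{-1}$) shows that rotation (respectively $\On$) equivariance of $\oz$ is equivalent to the corresponding equivariance of $\tilde\oz$. The second part of Theorem~\ref{thm:t_alpha_fconvs} then delivers $\xi\in C_b((0,\infty))$ with $\lim_{t\to 0^+}\xi(t)t=0$ such that $\tilde\oz=\ot_{n,\xi}$, and by the definitions of $\ot_{n,\xi}$ and $\ot_{n,\xi}^*$ this reads $\oz=\ot_{n,\xi}^*$. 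I do not anticipate any genuine obstacle: the argument is a formal dualization whose substance has already been carried out in Theorem~\ref{thm:t_alpha_fconvs}. The only points requiring care are the bookkeeping of the scaling $(\lambda\sq u)^*=\lambda u^*$ (so that $n$-homogeneity on $\fconvf$ matches epi-$n$-homogeneity on $\fconvs$) and the clean transfer of the representing function $\psi$ through conjugation, both of which are purely formal.
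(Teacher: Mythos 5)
Your proposal is correct and takes exactly the approach the paper intends: the paper simply states Theorem~\ref{thm:t_alpha} as ``the dual result to Theorem~\ref{thm:t_alpha_fconvs}'' without writing out the argument, and your formal dualization via convex conjugation (using $(\lambda\sq u)^*=\lambda u^*$, $(v\circ\vartheta^{-1})^*=v^*\circ\vartheta^{-1}$, and the identity $\int_{\Rn}\zeta\d\MA(u^*;\cdot)=\int_{\dom(u)}\zeta(\nabla u)\d x$) is precisely the mechanism the paper set up in Section~\ref{se:class_simple} for transferring such statements between $\fconvs$ and $\fconvf$. Your write-up actually supplies more detail than the paper itself does here, but the route is the same.
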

	
    At last, we look at translation covariant valuations that are epi-homogeneous of degree $(n+1)$. We remark that the following result can be independently obtained from \cite[Theorem 1.9]{knoerr_ulivelli_polynomial}, which uses a similar strategy as the short and direct proof that is presented here.
    
    \begin{theorem}
        \label{thm:class_cov_top_degree_fconvs}
        A map $\oz\colon\fconvs\to\Rn$ is a continuous, translation covariant, vertically translation invariant valuation that is epi-homogeneous of degree $(n+1)$, if and only if there exists a function $\zeta\in C_c(\Rn)$ such that
        \begin{equation}
            \label{eq:oz_top_degree_fconvs}
            \oz(u)=\int_{\dom(u)} \zeta(\nabla u(x))x \d x
        \end{equation}
        for every $u\in\fconvs$. For $n\geq 2$, the valuation $\oz$ is in addition rotation equivariant, if and only if there exists a function $\alpha\in C_c({[0,\infty)})$ such that
        \[
        \oz(u)=\om_\alpha(u)
        \]
        for every $u\in\fconvs$. For $n = 1$, the same representation holds if we replace rotation equivariance with reflection equivariance.
    \end{theorem}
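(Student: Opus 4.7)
The plan is to separate sufficiency from necessity and, within necessity, to use the associated map $\oz^0$ as a bridge to the scalar classification of Theorem~\ref{thm:char_top_degree_real_val}. For sufficiency, Proposition~\ref{prop:top_degree_val_fconvs} already delivers continuity, translation covariance, vertical translation invariance, and simplicity of the valuation in \eqref{eq:oz_top_degree_fconvs}, so only epi-homogeneity of degree $(n+1)$ remains. The substitution $x=\lambda y$ together with $\dom(\lambda\sq u)=\lambda\dom(u)$ and $\nabla(\lambda\sq u)(x)=\nabla u(x/\lambda)$ produces the scaling factor $\lambda\cdot\lambda^n=\lambda^{n+1}$.

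For necessity, I would first show that $\oz^0$ is epi-homogeneous of degree $n$. Using the identity $(\lambda\sq u)\circ\tau_x^{-1}=\lambda\sq(u\circ\tau_{x/\lambda}^{-1})$ and applying $\oz$ to both sides, the epi-homogeneity of $\oz$ combined with translation covariance gives $\oz^0(\lambda\sq u)\,x=\lambda^n\oz^0(u)\,x$ for every $x\in\Rn$. Combined with Lemma~\ref{le:properties_oz_0}, $\oz^0$ is therefore a continuous, epi-translation invariant, real-valued valuation on $\fconvs$ that is epi-homogeneous of degree $n$, so Theorem~\ref{thm:char_top_degree_real_val} supplies $\zeta\in C_c(\Rn)$ with $\oz^0(u)=\int_{\dom(u)}\zeta(\nabla u(x))\d x$. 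I then set
\[
\tilde\oz(u)=\oz(u)-\int_{\dom(u)}\zeta(\nabla u(x))x\d x.
\]
The computation in the proof of Proposition~\ref{prop:top_degree_val_fconvs} shows that the subtracted term has associated map equal to $\oz^0$, so the translation-covariant contributions cancel and $\tilde\oz$ is translation invariant. Since it inherits vertical translation invariance, continuity, and epi-homogeneity of degree $(n+1)$ from its summands, $\tilde\oz$ is a continuous, epi-translation invariant, vector-valued valuation on $\fconvs$ that is epi-homogeneous of degree $(n+1)$.

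The principal obstacle is concluding $\tilde\oz\equiv o$. Here I would apply Theorem~\ref{thm:mcmullen_fconvf} coordinate-wise on $\fconvf$ and transfer the decomposition via conjugation, which interchanges homogeneity on $\fconvf$ with epi-homogeneity on $\fconvs$ since $(\lambda\sq u)^*=\lambda u^*$. Each coordinate of $\tilde\oz$ then decomposes as a sum of epi-homogeneous components of degrees $0,\ldots,n$. Evaluating $\tilde\oz(\lambda\sq u)$ and comparing the two resulting polynomial expressions in $\lambda$, namely $\lambda^{n+1}\tilde\oz(u)$ on one hand and a polynomial of degree at most $n$ on the other, forces every component, and hence $\tilde\oz$ itself, to vanish identically. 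The delicate part is the clean transfer of the McMullen-type decomposition to the dual space, though it is standard within the established framework.

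Finally, to handle rotation equivariance (or reflection equivariance when $n=1$), I would test the identity $\oz(u\circ\vartheta^{-1})=\vartheta\oz(u)$ on $u=\ind_K+\langle z,\cdot\rangle$ with $K\in\Kn$ and $z\in\Rn$. Since $u\circ\vartheta^{-1}=\ind_{\vartheta K}+\langle\vartheta z,\cdot\rangle$ and $\oz(u)=\zeta(z)m(K)$, while $m(\vartheta K)=\vartheta m(K)$, the identity reduces to $\bigl(\zeta(\vartheta z)-\zeta(z)\bigr)\vartheta m(K)=o$. Choosing $K$ with $m(K)\neq o$ yields $\zeta(\vartheta z)=\zeta(z)$ for every admissible $\vartheta$, so $\zeta$ is rotation invariant for $n\geq 2$, respectively even for $n=1$, and can be written as $\zeta(y)=\alpha(|y|)$ with some $\alpha\in C_c([0,\infty))$.
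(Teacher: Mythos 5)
Your proposal is correct and follows essentially the same route as the paper: sufficiency via Proposition~\ref{prop:top_degree_val_fconvs} plus the scaling computation, necessity by classifying $\oz^0$ through Theorem~\ref{thm:char_top_degree_real_val}, subtracting the associated term $u\mapsto\int_{\dom(u)}\zeta(\nabla u(x))x\d x$, and killing the epi-translation invariant remainder $\tilde\oz$ of epi-homogeneous degree $(n+1)$ with the homogeneous decomposition (Theorem~\ref{thm:mcmullen_fconvf}), then deducing $\zeta(y)=\alpha(|y|)$ from rotation equivariance by testing on $\ind_K+\langle z,\cdot\rangle$. The only cosmetic difference is that you re-derive the epi-homogeneity of $\oz^0$ directly (effectively re-proving the $\fconvs$-dual of Lemma~\ref{le:oz_0_hom}) rather than citing it, and you spell out the degree-comparison argument for $\tilde\oz\equiv o$; both are faithful to the paper's argument.
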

    \begin{proof}
        For $\zeta\in C_c(\Rn)$ it follows from Proposition~\ref{prop:top_degree_val_fconvs} that \eqref{eq:oz_top_degree_fconvs}	defines continuous, translation covariant, vertically translation invariant valuation. Now for $u\in\fconvs$ and $\lambda > 0$ we have $\nabla (\lambda \sq u)(x) = \nabla u(x/\lambda)$ for $x\in\Rn$, and therefore
        \[
            \oz(\lambda \sq u)=\int_{\dom(\lambda \sq u)} \zeta(\nabla u(x/\lambda)) x \d x=\lambda^{n+1} \int_{\dom(u)} \zeta(\nabla u(x))x \d x = \lambda^{n+1} \oz(u),
        \]
        which means that $\oz$ is epi-homogeneous of degree $(n+1)$.
        
        Conversely, let $\oz\colon \fconvs\to\Rn$ be a continuous, translation covariant, vertically translation invariant valuation that is epi-homogeneous of degree $(n+1)$. By Lemma~\ref{le:properties_oz_0} and Lemma~\ref{le:oz_0_hom} together with Theorem~\ref{thm:char_top_degree_real_val}, there exists $\zeta\in C_c(\Rn)$ such that the associated map $\oz^0$ is of the form
        \[
        \oz^0(u)=\int_{\dom(u)} \zeta(\nabla u(x))\d x
        \]
        for $u\in\fconvs$. We now define $\tilde{\oz}\colon\fconvs\to\Rn$ as
        \[
        \tilde{\oz}(u)=\oz(u)-\int_{\dom(u)} \zeta(\nabla u(x))x \d x
        \]
        for $u\in\fconvs$. For any $y\in\Rn$ we have
        \begin{align*}
            \tilde{\oz}(u\circ &\tau_{y}^{-1})\\
            &= \oz(u)+\left(\int_{\dom(u)} \zeta(\nabla u(x))\d x\right) y-\int_{\dom(u)} \zeta(\nabla u(x))x \d x - \left(\int_{\dom(u)} \zeta(\nabla u(x))\d x\right) y\\
            &=\tilde{\oz}(u),
        \end{align*}
        which together with Proposition~\ref{prop:top_degree_val_fconvs} and the properties of $\oz$ shows that $\tilde{\oz}$ is a continuous, epi-translation invariant valuation on $\fconvs$ that is epi-homogeneous of degree $(n+1)$. Theorem~\ref{thm:mcmullen_fconvf} now implies that $\tilde{\oz}$ vanishes identically and hence $\oz$ must be as in \eqref{eq:oz_top_degree_fconvs}.
        
        In the rotation equivariant case, we have already shown the necessary properties of $\om_{\alpha}$ in Theorem~\ref{thm:class_cov_simple_fconvs}. Conversely, if a valuation $\oz$ of the form \eqref{eq:oz_top_degree_fconvs} is rotation equivariant, then, as in the proof of Theorem~\ref{thm:class_cov_simple_fconvs}, it follows from Lemma~\ref{le:g_commutes} that $\oz=\om_{\alpha}$ for some $\alpha\in C_c(\Rn)$.
    \end{proof}
    
    We conclude with the dual result for valuations on $\fconvf$.
	
    \begin{theorem}
        \label{thm:class_cov_top_degree}
        A map $\oz\colon\fconvf\to\Rn$ is a continuous, dually translation covariant, vertically translation invariant valuation that is homogeneous of degree $(n+1)$, if and only if there exists a function $\zeta\in C_c(\Rn)$ such that
        \begin{equation*}
            \oz(v)=\int_{\Rn\times\Rn} \zeta(x)y \d \Theta_0(v;(x,y))
        \end{equation*}
        for every $v\in\fconvf$. For $n\geq 2$, the valuation $\oz$ is in addition rotation equivariant, if and only if there exists a function $\alpha\in C_c({[0,\infty)})$ such that
        \[
        \oz(v)=\om_\alpha^*(v)
        \]
        for every $v\in\fconvf$. For $n = 1$, the same representation holds if we replace rotation equivariance with reflection equivariance.
    \end{theorem}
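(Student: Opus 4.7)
The plan is to deduce the result from its dual counterpart, Theorem~\ref{thm:class_cov_top_degree_fconvs}, by transferring everything through convex conjugation. Define $\oz^*\colon\fconvs\to\Rn$ by $\oz^*(u)=\oz(u^*)$. The first step is to verify that the hypotheses translate as required: continuity of $\oz^*$ follows since the Legendre--Fenchel transform is a continuous involution between $\fconvf$ and $\fconvs$. For translation covariance, one checks that $(v+\langle x,\cdot\rangle)^*=v^*\circ \tau_x^{-1}$, which turns dual translation covariance of $\oz$ into translation covariance of $\oz^*$ with associated map $(\oz^0)^*(u)=\oz^0(u^*)$. For vertical translation invariance, $(v+c)^*=v^*-c$ gives $\oz^*(u-c)=\oz(v+c)=\oz(v)=\oz^*(u)$. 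For epi-homogeneity, $(\lambda v)^*=\lambda\sq v^*$ (equivalently, $(\lambda\sq u)^*=\lambda u^*$), so homogeneity of degree $n+1$ of $\oz$ corresponds to epi-homogeneity of degree $n+1$ of $\oz^*$.

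Having established these correspondences, I apply Theorem~\ref{thm:class_cov_top_degree_fconvs} to obtain $\zeta\in C_c(\Rn)$ with
\[
\oz^*(u)=\int_{\dom(u)}\zeta(\nabla u(x))\,x\d x
\]
for every $u\in\fconvs$. Setting $u=v^*$ and applying the dual identity \eqref{eq:theta_conj} then yields
\[
\oz(v)=\oz^*(v^*)=\int_{\Rn\times\Rn}\zeta(x)y\d\Theta_0(v;(x,y)),
\]
which is the desired representation. The converse direction (that every such integral defines a continuous, dually translation covariant, vertically translation invariant valuation homogeneous of degree $(n+1)$) follows directly from Theorem~\ref{thm:theta} applied coordinate-wise, together with the easy computations that verify the covariance and homogeneity properties.

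For the rotation equivariant case, I check that rotation equivariance of $\oz$ passes to $\oz^*$: since $(v\circ \vartheta^{-1})^*=v^*\circ \vartheta^{-1}$ for $\vartheta\in \On$, we have $\oz^*(u\circ \vartheta^{-1})=\vartheta\oz^*(u)$. The second part of Theorem~\ref{thm:class_cov_top_degree_fconvs} then provides $\alpha\in C_c({[0,\infty)})$ such that $\oz^*=\om_\alpha$, and by the definition $\om_\alpha^*(v)=\om_\alpha(v^*)$, this gives $\oz(v)=\om_\alpha^*(v)$, as claimed. The reverse implication is again immediate from the properties already established for $\om_\alpha^*$ in Section~\ref{se:intrinsic_moments_minkowski_vectors} together with Theorem~\ref{thm:theta}.

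There is no real obstacle here beyond bookkeeping: the substance of the argument is contained in Theorem~\ref{thm:class_cov_top_degree_fconvs}, and the proof reduces to carefully tracking how each structural property behaves under convex conjugation and to invoking \eqref{eq:theta_conj} to rewrite the representation in terms of the Hessian measure $\Theta_0$.
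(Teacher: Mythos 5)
Your proof is correct and takes essentially the same approach as the paper: the paper presents Theorem~\ref{thm:class_cov_top_degree} as the dual counterpart of Theorem~\ref{thm:class_cov_top_degree_fconvs}, relying on exactly the conjugation dictionary you spell out (the correspondences $(v+\langle x,\cdot\rangle)^*=v^*\circ\tau_x^{-1}$, $(\lambda v)^*=\lambda\sq v^*$, $(v\circ\vartheta^{-1})^*=v^*\circ\vartheta^{-1}$, and the identity \eqref{eq:theta_conj}), which the paper establishes at the start of Section~\ref{se:class_simple} and leaves implicit here.
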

    
    \subsection*{Acknowledgments}
    This project was supported by the Austrian Science Fund (FWF) 10.55776/P36210.
    
    \footnotesize

    \vfill
    
    \parbox[t]{8.5cm}{
        Mohamed A. Mouamine\\
        Institute of Discrete Mathematics and Geometry\\
        TU Wien\\
        Wiedner Hauptstra{\ss}e 8-10/104-06\\
        1040 Wien, Austria\\
        e-mail: mohamed.mouamine@tuwien.ac.at
    }
    
    \bigskip
    
    \parbox[t]{8.5cm}{
        Fabian Mussnig\\
        Institute of Discrete Mathematics and Geometry\\
        TU Wien\\
        Wiedner Hauptstra{\ss}e 8-10/104-06\\
        1040 Wien, Austria\\
        e-mail: fabian.mussnig@tuwien.ac.at
    }
\end{document}